\theoremstyle{change}%
\newtheorem{definition}{Definition:}[section]%
\newtheorem{proposition}[definition]{Proposition:}%
\newtheorem{theorem}[definition]{Theorem:}%
\newtheorem{lemma}[definition]{Lemma:}%
{\theorembodyfont{\rmfamily}\newtheorem{remark}[definition]{Remark:}}%
\newenvironment{proof}
  {{\bf Proof:}}
  {\qquad \hspace*{\fill} $\Box$}%
\newcommand{\N}{\mathbb{N}}%
\newcommand{\Z}{\mathbb{Z}}%
\newcommand{\R}{\mathbb{R}}%
\newcommand{\tm}{\times}%
\newcommand{\RC}{\mathcal{R}}%
\newcommand{\SC}{\mathcal{S}}%
\newcommand{\UC}{\mathcal{U}}%
\newcommand{\VC}{\mathcal{V}}%
\newcommand{\inner}{\operatorname{int}}%
\newcommand{\cl}{\operatorname{cl}}%
\newcommand{\ep}{\varepsilon}%
\newcommand{\inv}{\operatorname{inv}}%
\newcommand{\dist}{\operatorname{dist}}%
\newcommand{\bm}{\boldmath}%
\newcommand{\um}{\unboldmath}%
\newcommand{\rmS}{\mathrm{S}}%
\begin{document}

\title{Network entropy and data rates required for networked control}%

\author{Christoph Kawan and Jean-Charles Delvenne\thanks{First author: Courant Institute of Mathematical Sciences, New York University, 251 Mercer Street, New York, N.Y.~10012-1185, USA; Phone: +1 212 9983583; e-mail: kawan@cims.nyu.edu. Second author: Department of Mathematical Engineering, Universit\'e catholique de Louvain, Louvain-la-neuve (Belgium); Phone: +32 10478053; e-mail: jean-charles.delvenne@uclouvain.be.}
\thanks{The first author has been supported by DFG fellowship KA 3893/1-1.}
\thanks{The second author has been supported by the Programme of Interuniversity Attraction Poles of the Belgian Federal Science Policy Office (IAP DYSCO), and the Action de Recherches Concertees (ARC) of the Federation Wallonie-Bruxelles.}}%
\maketitle%

\begin{abstract}
We consider the problem of making a set of states invariant for a network of controlled systems. We assume that the subsystems,  initially uncoupled, must be interconnected through controllers to be designed with a constraint on the data rate obtained by every subsystem from all the other subsystems. We introduce the notion of subsystem invariance entropy, which is a measure for the smallest data rate arriving at a fixed subsystem, above which the overall system is able to achieve the control goal. Moreover, we associate to a network of $n$ subsystems a closed convex set of $\R^n$ encompassing all possible combinations of data rates within the network that guarantee the existence of corresponding feedback strategies for making a given set invariant. The extremal points of this convex set can be regarded as Pareto-optimal data rates for the control problem, expressing a trade-off between the data rates required by different systems. We characterize these quantities for linear systems, and for synchronization of chaos.%
\end{abstract}

\begin{IEEEkeywords}
Networked control, zero-error capacity, controlled invariance, invariance entropy, feedback transformation.%
\end{IEEEkeywords}

\section{Introduction}%

A bottleneck of information, i.e., a channel transmitting information with finite data-rate capacity, inside a feedback loop may make the pursuit of a control objective more challenging or even impossible. Characterizing the required data rate to achieve a particular control task under various circumstances has been an active topic since the pioneering work of Delchamps~\cite{delchamps}. Many contributions initially focused on the case of a single system, with a single controller, and various communication constraints. Among the contributions in that setting (see, e.g., \cite{wong1999systems,tatikonda2004control,Sahai06,delv06,nair2007feedback,martins+07}), we single out Nair et al.~\cite{NEM}, characterizing the required data rate to achieve set invariance and stabilization of discrete-time deterministic systems through finite data-rate channels. In this paper, the authors introduced the notion of \emph{topological feedback entropy}, an intrinsic quantity of the open-loop system, which measures the smallest data rate above which the corresponding control problem can be accomplished by some appropriate feedback controller. In Colonius and Kawan \cite{CKa}, another quantity named \emph{invariance entropy} was introduced in the analogous continuous-time setting, as a measure for the complexity of the control task to render a set of states invariant. Though the definitions of topological feedback entropy and invariance entropy are conceptually different, it turned out that they are equivalent, after being adapted to the same (discrete-time) setting, see Colonius et al.~\cite{CKN}. In several frameworks, a key result is that achieving a control objective (such as stabilization, or making a set invariant) for a linear system of unstable eigenvalues $\lambda_1,\ldots,\lambda_k$ (and possibly other stable eigenvalues) requires a minimum data rate of $\sum_i\log|\lambda_i|$ bits per unit of time.%

Network control theory aims at the design of distributed control strategies, where the overall system is composed of several subsystems, each actuated by a specific controller. For instance, one may impose a communication graph between subsystems and controllers, with the problem to design controllers that achieve a certain control goal or minimize a control cost while respecting these interconnection patterns. Results in this direction for linear systems can be found, e.g., in the book \cite{MSa} by Matveev and Savkin.%

Along those lines, a desirable result would be, given a limited data-rate capacity $R_{ij}$ from the output of subsystem $i$ to the input of subsystem $j$ (for all pairs $i,j$), determine whether it is possible to design suitable controllers for every subsystem and communication strategies between the output of every subsystem and every controller, that achieve a certain control objective while respecting the data-rate constraints along each communication line. As far as we know, this problem is essentially open.%

In this paper, we tackle a simpler problem, where a constraint is put on the \emph{total} data rate accessible to input of each subsystem. This limit on the data rate can be seen as a bottleneck of information at the entry of the subsystem. One can assume for instance that only an imperfect, e.g., quantized, measurement is accessible to the controller, which then decides of the input to apply to the subsystem. Equivalently, one can assume, as we do in this paper, that the bottleneck stands between the controller (seen as a coder, in a coding-theoretic view) having perfect knowledge of the overall state and the actuator (decoder). The problem is therefore to design a set of controllers achieving a certain control goal given these data-rate constraints. Note that subsystems only communicate through the controllers we design, i.e., do not bypass the bottleneck of information through direct connections (see Fig.~\ref{fig1}). We assume that the goal is to make a certain subset $Q$ of the overall state space $X_1 \tm \ldots \tm X_n$ (where $X_i$ is the state space of subsystem $i$) invariant.% 

As an example, one may think of drones, or other kinds of agents that must maintain a certain shape in space, e.g., so that every distance $\|x_i-x_j\|$ is in a prescribed interval $d_{i,j} \pm \ep$. The positions are measured, e.g., with cameras by a central entity, and a centrally-computed appropriate control signal is sent to each drone through a finite rate wireless channel (which stands here between control and actuation). Alternatively, the central entity only sends quantized estimates of the overall state to each drone, which then computes the most appropriate course of action (the channel stands here between estimation and controller).%

In this paper, we characterize the set of possible data rates that must be received at the entry of each subsystem, by a suitable generalization of invariance entropy \cite{CKa}, called the network entropy set. It is a subset of $\R^n$, that depends on the $n$ individual subsystems and the set $Q$ to be made invariant. We show that a point $(h_1,\ldots,h_n)$ belongs to this set if and only if there is a control strategy that achieves the control objective, where the first subsystem receives a data rate $h_1$, the second subsystem a data rate $h_2$, etc.%

We find that in some situations there is a trade-off between the rates to be allowed to the systems: one subsystem can receive no information at all if the other receives twice more, for instance. This is the case when  chaotic systems are to be practically synchronized, i.e., interconnected so that their trajectories remain within distance $\ep$ from one another. In other cases, such as controllable linear systems, there is no such trade-off: the control goal is achievable if and only if a sufficient rate is available to each of the subsystems, whose minimum value only depends on this specific subsystem.%

A simpler case is when only one of the subsystems obeys a data-rate constraint, while the other subsystems have full access to the state of every subsystem. We characterize the minimum required data rate for this subsystem as the subsystem invariance entropy. We recover invariance entropy in case of a single system ($n=1$). We also show that the subsystem invariance entropy takes, under mild conditions, the form $\sum_i \log|\lambda_i|$, summed over unstable eigenvalues, for linear subsystems.%

It should be noted that the kind of channels we consider here can be deterministic (lossless transmission of a finite alphabet of symbols), nondeterministic (possible confusion between two symbols), but not stochastic, as this would require a different, probabilistic statement of the control goal. The data-rate capacity is therefore defined as the zero-error capacity for the channel. We assume here that the transmission through the channel can occur without transmission or decoding delay. Of course, the existence of such delays would make the bounds we find in this paper conservative, instead of tight. In the presence of delays, capacity of a channel should be replaced by anytime capacity~\cite{Sahai06}.%

In this paper, we work with discrete-time systems described by difference equations, a time interval $[0,\tau]$ being understood as the set of nonnegative integers less than or equal to $\tau$. However, the general definitions and results can easily be adapted to continuous-time systems, described by differential equations, where $[0,\tau]$ now denotes a real interval.%

The paper is organized as follows. In Section \ref{sec:1}, we recall from \cite{CKa} the concept of invariance entropy in the case of a single system. Section \ref{sec:2} defines overall system and subsystem invariance entropy, as well as main properties of the latter, including the connection with the required data rate to achieve the control objective. Subsystem invariance entropy for linear systems is derived in Section \ref{sec:lin1}. The network entropy set, its definition, properties, including relationship with subsystem invariance entropy, is the object of Section \ref{sec:3}. In Section \ref{sec:4}, the network entropy set for linear systems is characterized, while the network entropy set for synchronization of chaotic systems is treated in Section \ref{sec:5}. We end with perspectives.%
   
\begin{figure}[!h]
\centering
\includegraphics[width=8cm]{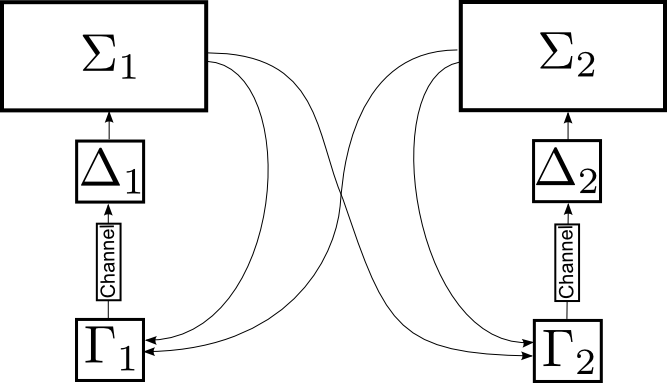}
\caption{A networked system is composed of $n$ subsystems $\Sigma_i$, here $n=2$. A limited data-rate capacity channel takes place between a coder $\Gamma_i$ and a decoder $\Delta_i$. The coder/decoder pair may be understood, e.g., as a quantizer/controller pair, or a controller/actuator pair (as we assume in this paper), etc. The problem is to determine which zero-error data-rate capacities allow the control objective (making a certain compact set $Q$ invariant in the overall state space) to be achieved, for some control and actuation strategies $\Gamma_i,\Delta_i$.}\label{fig1}
\end{figure} 

{\bf Notation:} We write $\Z$ for the set of integers and $\Z_+$ for the set of nonnegative integers. Logarithms are assumed to be taken to the base $2$.%

\section{Control systems and invariance entropy}\label{sec:1}%

In this paper, we consider discrete-time control systems given by difference equations%
\begin{equation*}
  x_{k+1} = f(x_k,u_k),\quad k\geq0.%
\end{equation*}
Here the right-hand side is a map $f:X\tm U \rightarrow X$, where $X$ is a topological space (the \emph{state space} of the system) and $U$ a nonempty set (the \emph{control value set}). We assume that for each $u\in U$ the map $f_u:X\rightarrow X$, $x\mapsto f(x,u)$, is continuous. The admissible control sequences are the elements of $\UC := U^{\Z_+}$, and the dynamics of the system is described by the \emph{transition map} $\varphi:\Z_+ \tm X \tm \UC \rightarrow X$,%
\begin{equation*}
  \varphi(k,x,\omega) = \left\{\begin{array}{cc}
                                    x & \mbox{if } k=0,\\
                                    f_{\omega_{k-1}} \circ \cdots \circ f_{\omega_0}(x) & \mbox{if } k\geq1.%
                               \end{array}\right.%
\end{equation*}
Note that for each $k\in\Z_+$ and $\omega\in\UC$ the map $\varphi_{k,\omega}:X\rightarrow X$, $x\mapsto\varphi(k,x,\omega)$, is continuous.%

We call a compact set $Q \subset X$ with nonempty interior \emph{(strongly) controlled invariant} provided that for every $x\in Q$ there exists $u\in U$ such that $f_u(x) \in \inner Q$. Given such a set $Q$, we define the \emph{invariance entropy of $Q$} as follows. For $\tau>0$, a set $\SC\subset\UC$ of control sequences is called $(\tau,Q)$-spanning if for every $x\in Q$ there is $\omega\in\SC$ with%
\begin{equation*}
  \varphi(k,x,\omega) \in \inner Q \mbox{\quad for\ } k=1,\ldots,\tau.%
\end{equation*}
We let $r_{\inv}(\tau,Q)$ denote the minimal cardinality of a $(\tau,Q)$-spanning set and define the invariance entropy of $Q$ by%
\begin{equation*}
  h_{\inv}(Q) := \lim_{\tau\rightarrow\infty}\frac{1}{\tau}\log r_{\inv}(\tau,Q).%
\end{equation*}
As shown in \cite{CKN}, the numbers $r_{\inv}(\tau,Q)$ are finite and the limit exists because of subadditivity (and hence is equal to the infimum over $\tau>0$). If we consider more than one system at the same time, we sometimes write $h_{\inv}(Q;\Sigma)$ to refer to a specific system $\Sigma$.%

In \cite{CKN} it has been shown that the quantity $h_{\inv}(Q)$ coincides with the topological feedback entropy introduced by Nair et al.~\cite{NEM}. Hence, it is a measure for the smallest data rate in a channel between coder and controller, above which the system is able to render the set $Q$ invariant, a typical goal in control theory. In a metric space setting, the definition of topological feedback entropy can be modified in such a way that it becomes an analogous measure for the problem of local uniform exponential stabilization at an equilibrium point. This is done by taking appropriate limits, letting the size of the set $Q$ and that of the control range tend to zero. In Nair et al.~\cite{NEM} it is proved that the corresponding data rate or entropy can be expressed in terms of the unstable eigenvalues of the linearization about the equilibrium. Similar formulas and estimates for the invariance entropy can be found in the monograph \cite{Kaw}.%

\section{Subsystem invariance entropy}\label{sec:2}%

As a step towards characterizing the data rate required for each of $n$ interacting subsystems cooperating to achieve a common control goal for the overall system, we study the particular case where only the actuator of the $i$-th subsystem receives a constrained data rate, while other subsystems obey no such constraint and can take advantage of full knowledge about the overall state. The minimum data rate required is shown to be appropriately modeled by the subsystem invariance entropy, introduced in this section.%

\subsection{Definition and elementary properties}%

Consider a discrete-time control system $\Sigma$ which is the direct product of $n$ subsystems $\Sigma_1,\ldots,\Sigma_n$. We write $X_i$ for the state space and $U_i$ for the set of control values of $\Sigma_i$. The dynamics is given by%
\begin{equation*}
  x^{(i)}_{k+1} = f_i\left(x_k^{(i)},u_k^{(i)}\right),\quad k\geq0.%
\end{equation*}
We assume that $X_i$ is a topological space, $U_i$ a nonempty set, and $f_i:X_i\tm U_i\rightarrow X_i$ a map which is continuous in its first component. We write $\varphi_i:\Z_+\tm X_i\tm \UC_i \rightarrow X_i$ for the associated transition map, where $\UC_i := U_i^{\Z_+}$. The state space of the \emph{overall system} $\Sigma$ is the Cartesian product $X = X_1\tm \cdots \tm X_n$ (endowed with the product topology) and the control value set is $U = U_1\tm\cdots\tm U_n$. The corresponding transition map is given by $\varphi(k,x,\omega) = (\varphi_1(k,x_1,\omega_1),\ldots,\varphi_n(k,x_n,\omega_n))$, $\varphi:\Z_+ \tm X \tm \UC \rightarrow X$, $\UC = \UC_1\tm\cdots\tm\UC_n$. Moreover, we denote by $\pi_i:X\rightarrow X_i$ the canonical projection to the $i$-th component. Note that this map is continuous and open. For the projection to the $i$-th component of the space of control sequences we write $\pi_{\UC_i}:\UC \rightarrow \UC_i$.%

A system of this type can be a model for the underlying dynamics of a multi-agent system, in which the uncoupled subsystems are supposed to satisfy a common goal. An example would be a platoon of vehicles, where the vehicles should follow a common leader with the same velocity and prescribed distances. Another example are cooperating robots that are supposed to distribute over some region to get measurements, or to meet at a common place (see, e.g., \cite{Lun}). The following definition introduces a notion of entropy related to the control aim of keeping the overall system in a prescribed subset of the state space. In the vehicle example, this subset might be chosen in such a way that the distance of two consecutive vehicles is kept within a certain interval and also the velocities stay in a certain interval.%

\begin{definition}
Given a controlled invariant set $Q$ of $\Sigma$, $i\in\{1,\ldots,n\}$, and $\tau>0$, a subset $\SC_i \subset \UC_i$ is called $(\tau,Q)^{(i)}$-spanning if the set $\UC_1 \tm \cdots \tm \UC_{i-1} \tm \SC_i\tm\UC_{i+1}\tm \cdots \tm \UC_n$ is $(\tau,Q)$-spanning. The minimal cardinality of such a set is denoted by $r_{\inv}^{(i)}(\tau,Q)$ and we define the {\bf \bm$i$\um-th subsystem invariance entropy} of $Q$ by%
\begin{equation*}
  h_{\inv}^{(i)}(Q) := \lim_{\tau\rightarrow\infty}\frac{1}{\tau}\log r_{\inv}^{(i)}(\tau,Q).%
\end{equation*}
\end{definition}

In other words, for each $\tau$, we seek among all $(\tau,Q)$-spanning sets $\SC$ one whose projection to the $i$-th component, $\pi_{\UC_i}\SC$, has smallest cardinality. The asymptotic growth rate of this cardinality as $\tau \rightarrow \infty$ is the $i$-th subsystem invariance entropy.%

The following proposition shows that $h_{\inv}^{(i)}(Q)$ is well-defined and summarizes some of its elementary properties.%

\begin{proposition}\label{prop_elprops}
Let $Q$ be a controlled invariant set of $\Sigma$ and fix $i\in\{1,\ldots,n\}$. Then the following statements hold:%
\begin{enumerate}
\item[(a)] The numbers $r_{\inv}^{(i)}(\tau,Q)$ are finite and the sequence $\tau \mapsto \log r_{\inv}^{(i)}(\tau,Q)$ is subadditive. Therefore,%
\begin{equation}\label{eq_infeq}
  h_{\inv}^{(i)}(Q) = \inf_{\tau>0}\frac{1}{\tau}\log r_{\inv}^{(i)}(\tau,Q) < \infty.%
\end{equation}
\item[(b)] If $Q$ is a Cartesian product of compact sets $Q_j \subset X_j$ with nonempty interiors, $Q = Q_1 \tm \cdots \tm Q_n$, then $Q_i$ is a controlled invariant set of $\Sigma_i$ and%
\begin{equation}\label{eq_cpif}
  h_{\inv}^{(i)}(Q) = h_{\inv}(Q_i).%
\end{equation}
\item[(c)] In general, $\pi_i(Q)\subset X_i$ is a controlled invariant set of $\Sigma_i$ and%
\begin{equation}\label{eq_estimates}
  h_{\inv}(\pi_i(Q);\Sigma_i) \leq h_{\inv}^{(i)}(Q) \leq h_{\inv}(Q;\Sigma).%
\end{equation}
\end{enumerate}
\end{proposition}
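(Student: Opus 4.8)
The plan is to reduce every assertion to the single-system theory of Section~\ref{sec:1} by exploiting that the overall dynamics is a product, so that $\pi_i\bigl(\varphi(k,x,\omega)\bigr)=\varphi_i(k,x_i,\omega_i)$ for every $k$. The one elementary observation underlying all three parts is this: if $\SC\subset\UC$ is any $(\tau,Q)$-spanning set and $\SC_i:=\pi_{\UC_i}(\SC)$, then $\SC\subset\UC_1\tm\cdots\tm\SC_i\tm\cdots\tm\UC_n$, and since any superset of a spanning set is again spanning, $\SC_i$ is $(\tau,Q)^{(i)}$-spanning with $|\SC_i|\le|\SC|$. Applying this to a minimal $(\tau,Q)$-spanning set yields at once $r_{\inv}^{(i)}(\tau,Q)\le r_{\inv}(\tau,Q)<\infty$, which settles the finiteness in (a) and the right-hand inequality in (c).

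For the subadditivity in (a) I would carry out the standard concatenation argument componentwise. Let $\SC_i^1$ be $(\tau_1,Q)^{(i)}$-spanning and $\SC_i^2$ be $(\tau_2,Q)^{(i)}$-spanning, and set $\SC_i:=\{\alpha\wedge\beta:\alpha\in\SC_i^1,\ \beta\in\SC_i^2\}$, where $\alpha\wedge\beta\in\UC_i$ follows $\alpha$ on $\{0,\dots,\tau_1-1\}$ and then $\beta$. Given $x\in Q$, I first pick $\omega\in\UC_1\tm\cdots\tm\SC_i^1\tm\cdots\tm\UC_n$ with $\varphi(k,x,\omega)\in\inner Q$ for $1\le k\le\tau_1$; the point $y:=\varphi(\tau_1,x,\omega)\in\inner Q\subset Q$ can then be steered by some $\nu\in\UC_1\tm\cdots\tm\SC_i^2\tm\cdots\tm\UC_n$ with $\varphi(k,y,\nu)\in\inner Q$ for $1\le k\le\tau_2$. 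Concatenating $\omega$ and $\nu$ componentwise produces a sequence whose $i$-th component lies in $\SC_i$ and whose other components range over all of $\UC_j$, and the semigroup property of $\varphi$ shows it keeps $x$ in $\inner Q$ for $1\le k\le\tau_1+\tau_2$. Hence $r_{\inv}^{(i)}(\tau_1+\tau_2,Q)\le|\SC_i^1|\,|\SC_i^2|$, and Fekete's lemma gives \eqref{eq_infeq}.

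For (b) and (c) the controlled-invariance claims come from projecting the one-step property: for $x_i\in Q_i$ (resp.\ $x_i\in\pi_i(Q)$) I choose $x\in Q$ with $\pi_i(x)=x_i$ and a control $u$ with $f_u(x)\in\inner Q$, so that $\pi_i(f_u(x))=f_i(x_i,u_i)\in\pi_i(\inner Q)$. In the product case $\inner Q=\inner Q_1\tm\cdots\tm\inner Q_n$, so this lands in $\inner Q_i$; in the general case I use that $\pi_i$ is open to get $\pi_i(\inner Q)\subset\inner\pi_i(Q)$, which simultaneously shows $\pi_i(Q)$ is compact with nonempty interior. The identity \eqref{eq_cpif} then follows from $r_{\inv}^{(i)}(\tau,Q)=r_{\inv}(\tau,Q_i;\Sigma_i)$: a $(\tau,Q_i)$-spanning set for $\Sigma_i$ is $(\tau,Q)^{(i)}$-spanning because each remaining coordinate can be held in $\inner Q_j$ by iterating the controlled invariance of $Q_j$ (so $\UC_j$ is itself $(\tau,Q_j)$-spanning), while conversely restricting a $(\tau,Q)^{(i)}$-spanning family to the $i$-th coordinate over points $x=(x_1,\dots,x_n)$ with arbitrary $x_j\in Q_j$ forces it to be $(\tau,Q_i)$-spanning. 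The left-hand inequality of \eqref{eq_estimates} is the same computation with $\pi_i(Q)$ in place of $Q_i$: lifting $x_i\in\pi_i(Q)$ to some $x\in Q$, the $i$-th component of a spanning trajectory for $x$ stays in $\pi_i(\inner Q)\subset\inner\pi_i(Q)$.

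The genuinely delicate point, and essentially the only place a topological hypothesis enters, is keeping track of interiors under projection in (c): one must invoke openness of $\pi_i$ to guarantee $\pi_i(\inner Q)\subset\inner\pi_i(Q)$, without which neither controlled invariance of $\pi_i(Q)$ nor the left inequality would go through. Everything else is bookkeeping with the product structure and the semigroup property of $\varphi$.
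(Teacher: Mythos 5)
Your proof is correct and follows essentially the same route as the paper's: the projection observation $\pi_{\UC_i}(\SC)$ for finiteness and the upper bound in (c), componentwise concatenation plus Fekete's lemma for subadditivity in (a), the product structure $\inner Q=\inner Q_1\tm\cdots\tm\inner Q_n$ together with iterated controlled invariance of each $Q_j$ for the two-way correspondence in (b), and openness of $\pi_i$ (giving $\pi_i(\inner Q)\subset\inner\pi_i(Q)$) for controlled invariance of $\pi_i(Q)$ and the lower bound in (c). You also correctly identify, as the paper implicitly does via its citation of the single-system theory, that finiteness ultimately rests on compactness of $Q$ and continuity of the transition map in the state variable.
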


\begin{proof}
To show (a), note that finiteness of $r_{\inv}^{(i)}(\tau,Q)$ follows from the simple observation that a $(\tau,Q)$-spanning set $\SC\subset\UC$ projects to a $(\tau,Q)^{(i)}$-spanning set (cf.~the proof of (c)), and $\SC$ can be chosen to be finite, which follows from continuity of the transition map with respect to $x$ and compactness of $Q$ (see \cite[Prop.~2.2]{NEM}). To show subadditivity, take a $(\tau_1,Q)^{(i)}$-spanning set $\SC_i^1 \subset \UC_i$ and a $(\tau_2,Q)^{(i)}$-spanning set $\SC_i^2 \subset \UC_i$ for arbitrary $\tau_1,\tau_2>0$. Define a new set $\SC_i \subset \UC_i$ by%
\begin{equation*}
  \SC_i := \left\{ \omega \star \mu:\ \omega \in \SC_i^1,\ \mu\in\SC_i^2 \right\},%
\end{equation*}
where $\omega \star \mu$ is defined as the concatenation%
\begin{equation*}
  (\omega \star \mu)_k = \left\{\begin{array}{cc}
                                    \omega_k & \mbox{for } 0 \leq k \leq \tau_1-1\\
                                    \mu_{k-\tau_1} & \mbox{for } \tau_1 \leq k \leq \tau_1+\tau_2-1%
                                 \end{array}\right.,%
\end{equation*}
(extended arbitrarily for $k\geq\tau_1+\tau_2$.) We claim that $\SC_i$ is a $(\tau_1+\tau_2,Q)^{(i)}$-spanning set. Indeed, take $x\in Q$. Then there exists $\omega \in \UC_1 \tm \cdots \tm \UC_{i-1} \tm \SC_i^1 \tm \UC_{i+1}\tm\cdots\tm\UC_n$ with $\varphi(k,x,\omega) \in \inner Q$ for $k=1,\ldots,\tau_1$. Put $y := \varphi(\tau_1,x,\omega) \in Q$. Then there is $\mu \in \UC_1 \tm \cdots \tm \UC_{i-1} \tm \SC_i^2 \tm \UC_{i+1}\tm\cdots\tm\UC_n$ with $\varphi(k,y,\mu) \in \inner Q$ for $k=1,\ldots,\tau_2$, or equivalently, $\varphi(k+\tau_1,x,\omega \star \mu) \in \inner Q$ for $k=1,\ldots,\tau_2$. Since the $i$-th component of $\omega\star\mu$ is contained in $\SC_i$, this proves the claim. Choosing $\SC_i^1$, $\SC_i^2$ minimal, it follows that $r_{\inv}^{(i)}(\tau_1+\tau_2,Q) \leq r_{\inv}^{(i)}(\tau_1,Q) \cdot r_{\inv}^{(i)}(\tau_2,Q)$, implying subadditivity of $\log r_{\inv}^{(i)}(\tau,Q)$. The equality in \eqref{eq_infeq} now follows from Fekete's subadditivity lemma (cf.~\cite[Lem.~2.1]{CKN}).%

To show (b), take $x_i \in Q_i$ and let $x\in Q$ with $\pi_i(x) = x_i$. Since $Q$ is controlled invariant, there exists $u = (u_1,\ldots,u_n) \in U_1 \tm \cdots \tm U_n$ with $f(x,u) \in \inner Q$. This implies%
\begin{equation*}
  f_i(x_i,u_i) = \pi_i(f(x,u)) \in \pi_i(\inner Q) \subset \inner\pi_i(Q) = \inner Q_i,%
\end{equation*}
since $\pi_i$ is an open map. Hence, $Q_i$ is controlled invariant with respect to $\Sigma_i$. To show \eqref{eq_cpif}, assume that $\SC_i \subset \UC_i$ is $(\tau,Q_i)$-spanning. We claim that $\SC_i$ is also $(\tau,Q)^{(i)}$-spanning. Indeed, for every $x = (x_1,\ldots,x_n) \in Q = Q_1 \tm\cdots \tm Q_n$, we find $\omega_j \in \UC_j$ ($j\neq i$) with $\varphi_j(k,x_j,\omega_j) \in \inner Q_j$ for $k=1,\ldots,\tau$ by controlled invariance of $Q_j$, and $\omega_i \in \SC_i$ with $\varphi(k,x_i,\omega_i) \in \inner Q_i$ for $k=1,\ldots,\tau$. Putting $\omega := (\omega_1,\ldots,\omega_n)$, we find%
\begin{equation*}
  \varphi(k,x,\omega) \in \inner Q_1 \tm \cdots \tm \inner Q_n = \inner Q \mbox{\ for\ } k=1,\ldots,\tau,%
\end{equation*}
proving the claim. On the other hand, if $\SC_i\subset\UC_i$ is $(\tau,Q)^{(i)}$-spanning, then it is obviously also $(\tau,Q_i)$-spanning. Hence, $(\tau,Q_i)$-spanning and $(\tau,Q)^{(i)}$-spanning sets are in one-to-one correspondence, implying \eqref{eq_cpif}.%

Finally, let us show (c). Since $\pi_i$ is continuous and open, $\pi_i(Q)$ is compact and has nonempty interior. The proof of controlled invariance is the same as in (b). With the same reasoning as before, we see that a $(\tau,Q)^{(i)}$-spanning set $\SC_i\subset\UC_i$ is also $(\tau,\pi_i(Q))$-spanning. This implies the first inequality in \eqref{eq_estimates}. To see the second one, take a $(\tau,Q)$-spanning set $\SC \subset \UC = \UC_1\tm\cdots\tm\UC_n$ and put $\SC_i := \pi_{\UC_i}(\SC)$. We claim that $\SC_i$ is $(\tau,Q)^{(i)}$-spanning. Indeed, take $x\in Q$. Then there exists $\omega \in \SC$ with $\varphi(k,x,\omega) \in \inner Q$ for $k=1,\ldots,\tau$. Since $\omega = (\omega_1,\ldots,\omega_n) \in \SC \subset \UC_1 \tm \cdots \tm \UC_{i-1} \tm \pi_{\UC_i}(\SC) \tm \UC_{i+1}\tm\cdots\tm\UC_n$, this proves the claim and completes the proof of (c).%
\end{proof}

\begin{remark}
Notice that the obvious monotonicity properties of $r_{\inv}^{(i)}(\cdot)$ with respect to $\tau$ and to each $U_j$ hold, i.e., $\tau \mapsto r_{\inv}^{(i)}(\tau,Q)$ is increasing, and enlarging any of the control value sets $U_j$ can only lower the values of $r_{\inv}^{(i)}(\tau,Q)$ and hence of $h_{\inv}^{(i)}(Q)$.%
\end{remark}

\subsection{The data-rate theorem}%

In this section, we prove that the $i$-th subsystem invariance entropy $h_{\inv}^{(i)}(Q)$ measures the smallest possible information rate, more precisely the zero-error capacity, at the entry of  $i$-th subsystem above which the overall system is able to render the set $Q$ invariant,  while the other subsystems can be controlled with full knowledge of the overall state.%

Remember that we assume for convenience in this paper that the bottleneck of information stands between the controller (assumed to possess full knowledge of the overall state) and the actuator. The controller generates a signal over the time interval $(0,\tau] = \{1,\ldots,\tau\}$ described by $\Gamma_{\tau}:X^{(0,\tau]}\to B^{(0,\tau]}$, where $B$ is an alphabet used for transmission into the channel. The channel transmits the signal as a possibly nondeterministic (set-valued) map $\kappa_\tau:B^{(0,\tau]} \to B^{(0,\tau]}$. The actuator, reading the (possibly corrupted) signal from the channel, acts on the system with an input signal given by the map $\Delta_{\tau}:B^{(0,\tau]} \to U^{(0,\tau]}$.%

\begin{remark} 
The maps $\Gamma$ and $\Delta$ could in principle be chosen to be nondeterministic (set-valued), however it is easy to see that for all nondeterministic maps $\Gamma,\Delta$ achieving a control objective, deterministic maps can be chosen instead that achieve the same control objective. Thus, there is no loss of generality in assuming $\Gamma,\Delta$ to be deterministic as we do.%
\end{remark}

The zero-error capacity of such a channel is given by $\lim(1/\tau)\log b_{\tau}$, where $b_{\tau}$ is the maximum cardinality of a subset of $B^{(0,\tau]}$ whose elements are pairwise distinguishable when sent through the channel. Two signals in $B^{(0,\tau]}$ are distinguishable if their images under $\kappa_\tau$ have empty intersection. Therefore, the zero-error capacity is the maximum data rate that can be reliably transmitted through the channel.%

%Once a global control strategy $\delta$, i.e. the choice of a map $(\delta_\tau^{(j)})_{\tau}$ for all times $\tau$ and subsytem $\Sigma_j$, the observed data rate of the $i$-th controller is defined as $R^{(i)}=\lim_{\tau \to \infty} \log s^{(i)}_\delta/\tau$, where $s^{(i)}_\delta$ denotes the cardinality of the set of all 
%possible input sequences up to time $\tau$ that can be generated at $\Sigma_i$ for all possible initial conditions $x_0$ of the global system. We know that such a sequence can be transmitted by any channel that would stand between the controller and the actuator, whose zero-error capacity is strictly above $R^{(i)}$. \jcd{[Ref for this statement? consistence with standard invariance entropy/topo feedback entropy data rate definitions?]}

In this context, one can state the following data rate theorem.%

\begin{theorem}\label{thm:datarate1}
Let $Q$ be a controlled invariant set of $\Sigma$ and fix $i\in\{1,\ldots,n\}$. Then $h^{(i)}_{\inv}(Q)$ is the infimum zero-error capacity required between the $i$-th controller and actuator of $\Sigma_i$ over all overall control strategies $\Gamma,\Delta$ that make $Q$ invariant.%
\end{theorem}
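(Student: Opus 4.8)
The plan is to prove the theorem by establishing two matching inequalities: the subsystem invariance entropy $h^{(i)}_{\inv}(Q)$ is a lower bound on the required zero-error capacity, and it is also (approximately) achievable, so that any rate strictly above it suffices. The crucial conceptual point, which I would isolate first, is that the relevant object passing through the channel is the sequence of control inputs applied to subsystem $i$. Since the other subsystems have full state knowledge and are thus unconstrained, a control strategy $(\Gamma,\Delta)$ making $Q$ invariant with channel capacity $R$ effectively produces, for each initial state $x\in Q$, a control sequence $\omega_i\in\UC_i$ keeping the $i$-th component trajectory inside $\inner Q$ (jointly with freely chosen inputs to the other subsystems). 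The set of such $\omega_i$ realizable over $(0,\tau]$ is therefore a $(\tau,Q)^{(i)}$-spanning set, and its cardinality is controlled by how many distinguishable codewords the channel can carry.

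For the \emph{converse} (capacity $\geq$ entropy), I would argue as follows. Fix a control strategy that renders $Q$ invariant with a channel of zero-error capacity $R$. Over the horizon $(0,\tau]$, the decoder $\Delta_\tau$ maps received symbols to input sequences for the overall system; restricting attention to the $i$-th component gives a map into $\UC_i^{(0,\tau]}$. For the control objective to succeed deterministically (zero error), distinct initial states requiring distinct $i$-th inputs must be assigned codewords that the channel keeps distinguishable. Hence the number of distinct $i$-th input sequences actually used cannot exceed $b_\tau$, the maximal number of pairwise-distinguishable channel codewords. But these used $i$-th input sequences form a $(\tau,Q)^{(i)}$-spanning set, so $b_\tau \geq r^{(i)}_{\inv}(\tau,Q)$. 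Taking $\frac{1}{\tau}\log(\cdot)$ and letting $\tau\to\infty$ yields $R\geq h^{(i)}_{\inv}(Q)$. The Remark on reducing to deterministic $\Gamma,\Delta$ is what lets me treat codewords and input sequences cleanly here.

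For \emph{achievability} (any $R > h^{(i)}_{\inv}(Q)$ is attainable), I would start from a minimal $(\tau,Q)^{(i)}$-spanning set $\SC_i$ of cardinality $r^{(i)}_{\inv}(\tau,Q)$. The idea is the standard coder/decoder construction: partition $Q$ into finitely many regions according to which element of $\SC_i$ steers the $i$-th subsystem correctly, assign to each region a distinct codeword from a distinguishable set in $B^{(0,\tau]}$, let the coder $\Gamma_\tau$ transmit the codeword for the region containing the current state, and let the decoder $\Delta_\tau$ apply the corresponding $\omega_i\in\SC_i$ to subsystem $i$ while the remaining subsystems use their full-state feedback. Since $R > h^{(i)}_{\inv}(Q) = \inf_\tau \frac{1}{\tau}\log r^{(i)}_{\inv}(\tau,Q)$, for suitable $\tau$ the channel admits at least $r^{(i)}_{\inv}(\tau,Q)$ distinguishable codewords, so the assignment is feasible; iterating over successive blocks of length $\tau$ using the concatenation/subadditivity argument from Proposition~\ref{prop_elprops}(a) keeps $Q$ invariant for all time.

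The main obstacle I expect is the bookkeeping at the interface between the measurement/coding side and the control side over repeated horizons: one must verify that the block-wise scheme genuinely achieves invariance on the infinite horizon rather than only on a single window $(0,\tau]$, that the state at the end of each block lies back in $Q$ so the next block's spanning set applies (this is where the concatenation structure underlying subadditivity is essential), and that the zero-error (rather than average) nature of the capacity is respected throughout, so that no confusion between codewords can ever cause the $i$-th input to steer the trajectory out of $\inner Q$. Handling the nondeterministic-channel case cleanly, via distinguishability of codeword images under $\kappa_\tau$ rather than mere counting, is the delicate part; the deterministic reduction in the preceding Remark should make this manageable.
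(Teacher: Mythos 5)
Your proposal is correct and takes essentially the same route as the paper's proof: the converse bounds the number of distinct $i$-th input sequences a zero-error strategy can generate by $b_\tau$ and observes that these sequences must form a $(\tau,Q)^{(i)}$-spanning set, giving $b_\tau \geq r^{(i)}_{\inv}(\tau,Q)$, while achievability is the identical block-coding scheme that re-measures the state every $\tau$ steps and transmits the index of an element of a minimal $(\tau,Q)^{(i)}$-spanning set. The one small mismatch is that the ``deterministic reduction'' you invoke for the delicate counting step is, in the paper, a determinization of the \emph{channel} --- choosing $\tilde{\kappa}_\tau(s)\in\kappa_\tau(s)$ so that the image of $\tilde{\kappa}_\tau$ has minimal cardinality $b_\tau$, then modifying $\Gamma^{(i)}_\tau$ so that $\Delta_\tau\circ\tilde{\kappa}_\tau$ is injective on the coder's image --- rather than the Remark's determinization of $\Gamma,\Delta$, but this is precisely the mechanism you anticipated as the ``delicate part.''
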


\begin{proof}
Over a time interval, any successful control strategy $\Gamma,\Delta$ must be such that the image of $\Delta^{(i)}_\tau$ is at least of cardinality $r^{(i)}_{\inv}(\tau,Q)$. As the control objective, making $Q$ invariant, must succeed whatever corruption occurs in the channel, the same control strategy must be successful for any deterministic version of the channel $\tilde{\kappa}_\tau: B^{(0,\tau]} \to B^{(0,\tau]}$, i.e., a deterministic map created by choosing arbitrarily $\tilde{\kappa}_\tau(s)$ among the sets $\kappa_\tau(s)$ for every channel signal $s \in B^{(0,\tau]}$. The minimal cardinality of the image of $\tilde{\kappa}_\tau(s)$ is precisely $b_{\tau}$, as can easily be seen. For one such minimal choice of $\tilde{\kappa}_\tau(s)$, one can modify $\Gamma^{(i)}_\tau$ to $\tilde{\Gamma}^{(i)}_{\tau}$ so that $\Delta_{\tau} \circ \tilde{\kappa}_{\tau}$ is injective on the image of $\tilde{\Gamma}_{\tau}$. Indeed, if two channel signals $s_1,s_2 \in B^{(0,\tau]}$ lead to the same final input signal $\Delta_{\tau}(\tilde{\kappa}_\tau(s_1))=\Delta_{\tau}(\tilde{\kappa}_\tau(s_2))$, then the controller $\Gamma^{(i)}_\tau$ may as well be replaced by $\tilde{\Gamma}=\Gamma^{(i)}_\tau \circ \tilde{\kappa}_\tau \circ \tilde{\kappa}_\tau^{-1}$, for some choice of an inverse $\tilde{\kappa}_\tau^{-1}$, so as to comprise only $s_1$ or $s_2$ in its image, with the same final input signal being delivered to the system. In summary, we derive a modified control strategy $(\tilde{\Gamma}_\tau, \Delta_\tau)$ able to make $Q$ invariant through channel $\tilde{\kappa}_\tau$ until time $\tau$ at least, generating at most $s_\tau$ different input signals for $\Sigma_i$. Since this strategy is successful in making $Q$ invariant, the set of those input signals must be $(\tau,Q)^{(i)}$-spanning, thus must be of cardinality at least $r^{(i)}_{\inv}(\tau,Q)$, and therefore $b_\tau \geq r^{(i)}_{\inv}(\tau,Q)$. Passing to limit of large $\tau$, we see that the zero-error capacity of the channel is at least $h^{(i)}_{\inv}(Q)$. 

We need to prove that $h^{(i)}_{\inv}(Q)$ can be reached as an infimum of all allowed capacities, for some control strategies $\Gamma, \Delta$ and some channels $\kappa$. For any $\ep >0$, consider a $\tau$ large enough so that $\log r^{(i)}_{\inv}(\tau,Q)/\tau < h^{(i)}_{\inv}(Q) + \ep$. One chooses a $(\tau,Q)^{(i)}$-spanning set $\SC_i$. Then one can devise a block-coding strategy for $\Gamma,\Delta$, that measures $x_0$, then transmits through a no-delay channel the index of an appropriate element of $\SC_i$ that will maintain $Q$ invariant until time $\tau$. At time $\tau$, a measurement of $X_{\tau}$ is made by the controller, which then transmits the index of an appropriate element of $\SC_i$ to the actuator, that will maintain $Q$ invariant until $2\tau$, etc.%
\end{proof}

\subsection{Transformations}%

In this subsection, we describe a class of transformations preserving the subsystem invariance entropy. We know that invariance entropy is an invariant with respect to state transformations (see, e.g., \cite[Thm.~3.5]{CKa}), but not with respect to feedback transformations, which can be seen by looking at the formula for the entropy of linear systems that involves eigenvalues, not preserved by feedback transformations. The following proposition shows that this is different for subsystem invariance entropy. Here feedback transformations applied to all subsystems $\Sigma_j$, $j\neq i$, leave $h^{(i)}_{\inv}(Q)$ unchanged, whereas for $\Sigma_i$ only state transformations are allowed.%

In general, a (topological) state transformation of a system $x_{k+1} = f(x_k,u_k)$ with state space $X$ is given by a homeomorphism $\alpha:X\rightarrow Y$ onto a space $Y$. Then the dynamics of the transformed system on $Y$ is described by%
\begin{equation*}
  y_{k+1} =  g(y_k,u_k),\quad g(y,u) = \alpha(f(\alpha^{-1}(y),u)).%
\end{equation*}
Consequently, the $f$-trajectory with initial value $x$ and control sequence $u_k$ is transformed by $\alpha$ into the $g$-trajectory with initial value $\alpha(x)$ and the same control sequence. Additionally, we will allow a (bijective) transformation $\beta:U \rightarrow V$ of the control value set, in which case the transformed system takes the form%
\begin{equation*}
  y_{k+1} =  g(y_k,v_k),\quad g(y,v) = \alpha(f(\alpha^{-1}(y),\beta^{-1}(v))).%
\end{equation*}
We will also call these more general transformations $\alpha \tm \beta:X\tm U \rightarrow Y\tm V$ \emph{state transformations}.%

In contrast, a feedback transformation does not act on the state and control variables separately, since here the transformation of the control variable may also depend on the state. A \emph{feedback transformation} of the system $x_{k+1} = f(x_k,u_k)$ is given by a bijection $\Phi:X \tm U \rightarrow Y \tm V$ of the form $\Phi(x,u) = (\gamma(x),\delta(x,u))$, where $\gamma:X \rightarrow Y$ is a homeomorphism. In this case, the new right-hand side $g:Y\tm V \rightarrow Y$ is related to the old one by%
\begin{equation*}
  \gamma(f(x,u)) = g(\gamma(x),\delta(x,u)),%
\end{equation*}
and the $f$-trajectory $x_k$ with initial value $x_0$ and control sequence $u_k$ is mapped by $\gamma$ to the $g$-trajectory with initial value $\gamma(x_0)$ and control sequence $\delta(x_k,u_k)$.%

For simplicity we will assume that $n=2$ in the following, which we can do without loss of generality, since for a fixed $i\in\{1,\ldots,n\}$ we can combine the subsystems $\Sigma_j$, $j\neq i$, to one larger subsystem.%

\begin{proposition}\label{prop_equivalence}
Consider two networked systems given by%
\begin{equation}\label{eq_sys1}%
  \begin{array}{rl} x^{(1)}_{k+1} & = f_1\left(x_k^{(1)},u_k^{(1)}\right),\qquad (u_k^{(1)}) \in \UC_1\\
  x^{(2)}_{k+1} & = f_2\left(x_k^{(2)},u_k^{(2)}\right),\qquad (u_k^{(2)}) \in \UC_2 \end{array}%
\end{equation}
and%
\begin{equation}\label{eq_sys2}
  \begin{array}{rl} y^{(1)}_{k+1} & = g_1\left(y_k^{(1)},v_k^{(1)}\right),\qquad (v_k^{(1)}) \in \VC_1\\
  y^{(2)}_{k+1} & = g_2\left(y_k^{(2)},v_k^{(2)}\right),\qquad (v_k^{(2)}) \in \VC_2. \end{array}%
\end{equation}
The corresponding transition maps are denoted by $\varphi_i(k,x_i,\omega_i)$ and $\psi_i(k,y_i,\mu_i)$ ($i=1,2$), resp., the state spaces by $X = X_1 \tm X_2$ and $Y = Y_1 \tm Y_2$, and the control value sets by $U = U_1 \tm U_2$ and $V = V_1 \tm V_2$. We assume that there exists a state transformation $\Phi_1:X_1 \tm U_1 \rightarrow Y_1 \tm V_1$, $\Phi_1(x_1,u_1) = (\alpha(x_1),\beta(u_1))$, and a feedback transformation $\Phi_2:X_2 \tm U_2 \rightarrow Y_2 \tm V_2$, $\Phi_2(x_2,u_2) = (\gamma(x_2),\delta(x_2,u_2))$. Then, if $Q \subset X$ is a controlled invariant set for system \eqref{eq_sys1}, the set $P := (\alpha \tm \gamma)(Q)$ is controlled invariant for system \eqref{eq_sys2} and%
\begin{equation}\label{eq_transformedie}
  h_{\inv}^{(1)}(Q) = h_{\inv}^{(1)}(P).%
\end{equation}
\end{proposition}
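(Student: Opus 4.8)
The plan is to reduce everything to a cardinality-preserving bijection between $(\tau,Q)^{(1)}$-spanning sets for \eqref{eq_sys1} and $(\tau,P)^{(1)}$-spanning sets for \eqref{eq_sys2}, giving $r_{\inv}^{(1)}(\tau,Q) = r_{\inv}^{(1)}(\tau,P)$ for every $\tau$; the entropy equality \eqref{eq_transformedie} then follows immediately from the limit/infimum formula in Proposition~\ref{prop_elprops}(a). As a preliminary, since $\alpha\tm\gamma$ is a homeomorphism it carries compact sets with nonempty interior to compact sets with nonempty interior and satisfies $(\alpha\tm\gamma)(\inner Q) = \inner P$. Controlled invariance of $P$ then follows by taking $y=(\alpha(x_1),\gamma(x_2))\in P$ with $x\in Q$, choosing $u=(u_1,u_2)$ with $f(x,u)\in\inner Q$, and pushing it forward via $v_1:=\beta(u_1)$, $v_2:=\delta(x_2,u_2)$, using the defining intertwining relations $g_1(\alpha(x_1),\beta(u_1)) = \alpha(f_1(x_1,u_1))$ and $g_2(\gamma(x_2),\delta(x_2,u_2)) = \gamma(f_2(x_2,u_2))$ to land in $(\alpha\tm\gamma)(\inner Q)=\inner P$.

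The heart of the argument is two trajectory-level identities. For the state transformation on $\Sigma_1$, the $g_1$-trajectory from $\alpha(x_1)$ under the control sequence $\beta\circ\omega_1$ ($\beta$ applied componentwise) equals $\alpha$ applied to the $f_1$-trajectory from $x_1$ under $\omega_1$; that is, $\psi_1(k,\alpha(x_1),\beta\circ\omega_1) = \alpha(\varphi_1(k,x_1,\omega_1))$, which is immediate by induction from $g_1 = \alpha\circ f_1\circ(\alpha^{-1}\tm\beta^{-1})$. For the feedback transformation on $\Sigma_2$, given any $x_2$ and any $\omega_2\in\UC_2$ I define the transformed sequence \emph{along the trajectory} by $(\mu_2)_k := \delta(\varphi_2(k,x_2,\omega_2),(\omega_2)_k)$; then $\psi_2(k,\gamma(x_2),\mu_2) = \gamma(\varphi_2(k,x_2,\omega_2))$.

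With these in hand, take a $(\tau,Q)^{(1)}$-spanning set $\SC_1\subset\UC_1$ and set $\mathcal{T}_1 := \{\beta\circ\omega_1 : \omega_1\in\SC_1\}\subset\VC_1$; since $\beta$ is a bijection, $|\mathcal{T}_1| = |\SC_1|$. To check $\mathcal{T}_1$ is $(\tau,P)^{(1)}$-spanning, fix $y=(\alpha(x_1),\gamma(x_2))\in P$ with $x\in Q$ and choose $\omega_1\in\SC_1$, $\omega_2\in\UC_2$ with $\varphi(k,x,(\omega_1,\omega_2))\in\inner Q$ for $k=1,\ldots,\tau$. Putting $\mu_1:=\beta\circ\omega_1\in\mathcal{T}_1$ and building $\mu_2$ from $\omega_2$ as above, the two identities combine to give $\psi(k,y,(\mu_1,\mu_2)) = (\alpha\tm\gamma)(\varphi(k,x,(\omega_1,\omega_2)))\in(\alpha\tm\gamma)(\inner Q) = \inner P$, so $r_{\inv}^{(1)}(\tau,P)\le r_{\inv}^{(1)}(\tau,Q)$.

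The reverse inequality I obtain by symmetry rather than a fresh computation: $\Phi_1^{-1}(y_1,v_1) = (\alpha^{-1}(y_1),\beta^{-1}(v_1))$ is again a state transformation, $\Phi_2^{-1}$ is again a feedback transformation of the form $(\gamma^{-1}(y_2),\tilde\delta(y_2,v_2))$ — invertibility of $u_2\mapsto\delta(x_2,u_2)$ for fixed $x_2$ coming from $\Phi_2$ being a bijection — and $Q = (\alpha\tm\gamma)^{-1}(P)$, so applying the preceding construction to the inverse data yields $r_{\inv}^{(1)}(\tau,Q)\le r_{\inv}^{(1)}(\tau,P)$, hence equality and \eqref{eq_transformedie}. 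The only delicate point is the feedback step on $\Sigma_2$: because $\delta$ depends on the state, no single control-set map transports $\omega_2$ to $\mu_2$. This is harmless precisely because $\Sigma_2$ is \emph{unconstrained} in the $(\tau,Q)^{(1)}$-spanning condition — we read off $\mu_2$ trajectory-by-trajectory from the already-chosen $\omega_2$, so the state dependence never touches the count of the first component, which is all that $r_{\inv}^{(1)}$ measures. This is exactly why the statement can allow a genuine feedback transformation on $\Sigma_2$ while permitting only a state transformation on $\Sigma_1$.
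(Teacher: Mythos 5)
Your proposal is correct and takes essentially the same route as the paper's proof: controlled invariance of $P$ by pushing a control forward via $v_1=\beta(u_1)$, $v_2=\delta(x_2,u_2)$; the inequality $h^{(1)}_{\inv}(P)\le h^{(1)}_{\inv}(Q)$ by transporting a $(\tau,Q)^{(1)}$-spanning set with $\beta$ componentwise while reading off $\mu_2(k)=\delta(\varphi_2(k,x_2,\omega_2),\omega_2(k))$ along each trajectory; and the reverse inequality by interchanging the roles of the two systems using invertibility. Your explicit verification that $\Phi_2^{-1}$ is again a feedback transformation (bijectivity of $u_2\mapsto\delta(x_2,u_2)$ for fixed $x_2$, deduced from bijectivity of $\Phi_2$ and of $\gamma$) is a welcome filling-in of a step the paper dispatches with a one-line appeal to invertibility.
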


\begin{proof}
First note that $P$ is a compact set with nonempty interior, since $\alpha \tm \gamma$ is a homeomorphism. Let $y = (y_1,y_2) \in P$ and put $x = (x_1,x_2) := (\alpha \tm \gamma)^{-1}(y) \in Q$. Since $Q$ is controlled invariant, there exists $u = (u_1,u_2) \in U_1 \tm U_2$ with $(f_1(x_1,u_1),f_2(x_2,u_2)) \in \inner Q$. Put $v_1 := \beta(u_1)$ and $v_2 := \delta(x_2,u_2)$. Then%
\begin{eqnarray*}
  (g_1(y_1,v_1)\!\!\!\!\!\!&,&\!\!\!\!\!\! g_2(y_2,v_2))\\
	&=& (g_1(\alpha(x_1),\beta(u_1)),g_2(\gamma(x_2),\delta(x_2,u_2)))\\
  &=& (\alpha(f_1(x_1,u_1)),\gamma(f_2(x_2,u_2)))\\
  &\in& (\alpha \tm \gamma)(\inner Q) = \inner P.%
\end{eqnarray*}
This proves controlled invariance of $P$.%

Now let $\SC_1\subset\UC_1$ be a $(\tau,Q)^{(1)}$-spanning set and put $\widetilde{\SC}_1 := \{ \beta \circ \omega_1 \}_{\omega_1 \in \SC_1}$. We claim that $\widetilde{\SC}_1$ is a $(\tau,P)^{(1)}$-spanning set. Indeed, take $y = (y_1,y_2) \in P$ and let $(x_1,x_2) := (\alpha \tm \gamma)^{-1}(y_1,y_2)$. Then there exists $(\omega_1,\omega_2) \in \SC_1 \tm \UC_2$ with $(\varphi_1(k,x_1,\omega_1),\varphi_2(k,x_2,\omega_2)) \in \inner Q$ for $k = 1,\ldots,\tau$. Let $\mu_1 := \beta \circ \omega_1 \in \widetilde{\SC}_1$ and $\mu_2(k) :\equiv \delta(\varphi_2(k,x_2,\omega_2),\omega_2(k))$, $\mu_2 \in \VC_2$. Then $(\psi_1(k,y_1,\mu_1),\psi_2(k,y_2,\mu_2)) = (\alpha(\varphi_1(k,x_1,\omega_1)),\gamma(\varphi_2(k,x_2,\omega_2))) \in (\alpha \tm \gamma)(\inner Q) = \inner P$ for $k=1,\ldots,\tau$, proving the claim. Since $\#\widetilde{\SC}_1 = \#\SC_1$, this implies $h_{\inv}^{(1)}(P) \leq h_{\inv}^{(1)}(Q)$. Using that the transformations are invertible, we can interchange the roles of the two networks and obtain the assertion.%
\end{proof}

\begin{remark}
It is not hard to formulate a non-invertible version of the above proposition, in which the transformations are only assumed to be onto and open. In this case, the equality \eqref{eq_transformedie} becomes the inequality $h_{\inv}^{(1)}(P) \leq h_{\inv}^{(1)}(Q)$.%
\end{remark}

\section{Subsystem invariance entropy for linear systems}\label{sec:lin1}%

We can use Proposition \ref{prop_equivalence} to compute the subsystem invariance entropy for linear systems under some controllability assumption and a slightly stronger form of controlled invariance.%

\begin{theorem}\label{thm_lin}
Assume that each of the subsystems $\Sigma_i$ is linear, $x^{(i)}_{k+1} = A_i x_k^{(i)} + B_i u_k^{(i)}$ ($X_i = \R^{d_i}$, $U_i \subset \R^{m_i}$). Fix $i\in\{1,\ldots,n\}$ and assume that for each $j\neq i$ the pair $(A_j,B_j)$ is controllable. Furthermore, assume that there exists a compact set $K \subset \inner Q$ such that every $x\in Q$ can be steered into $\inner K$ in one step of time. Then%
\begin{equation}\label{eq_linearform}
  h_{\inv}^{(i)}(Q) = \sum_{\lambda\in\sigma(A_i)}\max\left\{0,n_{\lambda}\log|\lambda|\right\},%
\end{equation}
where $n_{\lambda}$ denotes the multiplicity of the eigenvalue $\lambda$. In particular, if all subsystems are controllable, then%
\begin{equation}\label{eq_sumform}
  \sum_{i=1}^n h_{\inv}^{(i)}(Q) = h_{\inv}(Q).%
\end{equation}
\end{theorem}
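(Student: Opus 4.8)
The plan is to establish the two matching bounds in \eqref{eq_linearform} separately and then read off \eqref{eq_sumform} as a consequence. As announced before Proposition \ref{prop_equivalence}, I first merge all subsystems $\Sigma_j$ with $j\neq i$ into a single subsystem: a block-diagonal pair is controllable whenever each block $(A_j,B_j)$ is, since in $[A-\lambda I,B]$ the block rows occupy disjoint columns and the Hautus rank condition therefore decouples across the blocks. Hence the merged subsystem is again controllable and I may assume $n=2$ and $i=1$, with $(A_2,B_2)$ controllable.

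For the lower bound I would use Proposition \ref{prop_elprops}(c), which gives $h_{\inv}(\pi_1(Q);\Sigma_1)\le h_{\inv}^{(1)}(Q)$. Since $\pi_1(Q)$ is a compact controlled invariant set with nonempty interior for the single linear system $\Sigma_1$, it then suffices to bound its ordinary invariance entropy from below by the classical volume-growth estimate for linear systems (see \cite{Kaw}): projecting via the spectral projection $\pi^+$ onto the unstable generalized eigenspace $E^+$ of $A_1$, the set of initial points kept in $\pi_1(Q)$ by a single control has $E^+$-projection of Lebesgue volume at most a constant times $|\det(A_1|_{E^+})|^{-\tau}$, while these projections must cover the fixed bounded set $\pi^+(\pi_1(Q))$; hence a spanning set has cardinality at least of order $\prod_{|\lambda|>1}|\lambda|^{\tau n_\lambda}$, which yields $\sum_{\lambda}\max\{0,n_\lambda\log|\lambda|\}$ in the limit. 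Note that this step uses no controllability of $\Sigma_1$.

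The upper bound is the core of the argument and where I expect the main difficulty. The crucial structural fact is that in the definition of $h_{\inv}^{(1)}$ the $\Sigma_2$-control may be chosen freely and state-dependently, so $\Sigma_2$ incurs no data-rate cost; together with its controllability and the availability of full state information, this lets me drive the $\Sigma_2$-component along any bounded reference. Using Proposition \ref{prop_equivalence} I would first put $\Sigma_2$ into a convenient (e.g.\ deadbeat) form, then work over a block $[0,T]$ and concatenate via the subadditivity of Proposition \ref{prop_elprops}(a). The construction covers $Q$ by cells that are fine along the unstable directions $E^+$ of $A_1$ and coarse along the center-stable directions, assigning to each cell one $\Sigma_1$-control and a state-dependent $\Sigma_2$-control. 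The hypothesis that every point of $Q$ can be steered into $\inner K$ in one step, with $K$ compactly contained in $\inner Q$, supplies a uniform margin that lets a single $\Sigma_1$-control keep an entire cell inside $\inner Q$ while the $\Sigma_2$-component is made to track within that margin. The number of cells needed to preserve the margin under $T$ steps of the $A_1$-dynamics scales like $\prod_{|\lambda|>1}|\lambda|^{T n_\lambda}$ up to a subexponential factor, giving the matching upper bound as $T\to\infty$. The delicate point is that $\Sigma_1$ is \emph{not} assumed controllable, so all control authority on the $\Sigma_1$-component must be extracted from the one-step steering hypothesis rather than from reachability; making the cell construction and margin-tracking rigorous, and in particular checking that the center directions $|\lambda|=1$ spread only polynomially and hence contribute no entropy, is the technical heart.

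Finally, \eqref{eq_sumform} follows by spectral bookkeeping. Applying \eqref{eq_linearform} to every $i$ gives $\sum_{i=1}^n h_{\inv}^{(i)}(Q)=\sum_{i=1}^n\sum_{\lambda\in\sigma(A_i)}\max\{0,n_\lambda\log|\lambda|\}$, and since the overall matrix $A$ is block-diagonal its spectrum is the disjoint union, with multiplicities, of the $\sigma(A_i)$, so this equals $\sum_{\lambda\in\sigma(A)}\max\{0,n_\lambda\log|\lambda|\}$. On the other hand the overall system $\Sigma$ is itself a single controllable linear system for which $Q$ satisfies the steering hypothesis, so the $n=1$ instance of \eqref{eq_linearform} gives $h_{\inv}(Q)=\sum_{\lambda\in\sigma(A)}\max\{0,n_\lambda\log|\lambda|\}$; comparing the two expressions yields the claim.
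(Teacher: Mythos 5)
Your overall architecture matches the paper's: merging the subsystems $j\neq i$ into a single controllable block, obtaining the lower bound from Proposition \ref{prop_elprops}(c) together with the volume-growth argument on the unstable subspace of $A_i$ (correct, and you rightly note it needs no controllability of $\Sigma_i$), and deducing \eqref{eq_sumform} by applying the $n=1$ case of \eqref{eq_linearform} to the block-diagonal overall system — the last step is verbatim the paper's argument. The genuine gap is the upper bound, which you yourself label ``the technical heart'' and then do not carry out: the cell construction, the margin tracking by the $\Sigma_2$-component, and above all the handling of eigenvalues with $|\lambda|=1$ are left as declarations of intent. These are not routine details. For \emph{strict} invariance (trajectories confined to $\inner Q$), the center directions are exactly where the naive formula can fail: polynomial spreading over a block of length $T$ is not obviously compatible with a margin that must survive concatenation of infinitely many blocks, and the paper's own remark points out that the strict-invariance formula for a single linear system had not been established before precisely for this reason — only an $\ep$-relaxed version was known. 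A proof attempt that defers exactly this step has not proved the theorem.

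What you also miss is that, once Proposition \ref{prop_equivalence} is invoked, no bespoke construction is needed at all, and in particular your ``crucial structural fact'' about cost-free state-dependent $\Sigma_2$-controls is superfluous. After transforming each $\Sigma_j$, $j\neq i$, into Brunovsky normal form by a feedback transformation (which preserves $h_{\inv}^{(i)}$, and one checks that the one-step-steering hypothesis on $Q$ transports along these transformations), the blocks $j\neq i$ become nilpotent, so the unstable spectrum of the transformed overall matrix is exactly that of $A_i$. Then the crude inequality $h_{\inv}^{(i)}(Q)\leq h_{\inv}(Q;\Sigma)$ from Proposition \ref{prop_elprops}(c) already suffices: even charging the $\Sigma_2$-controls at full rate costs nothing. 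The hypothesis $K\subset\inner Q$ with one-step steering is then used exactly once, to pass from strict to relaxed invariance: finitely many control values steer all of $Q$ into $\inner K$, and $K$ is controlled invariant with positive distance $\ep$ to $\partial Q$, so $h_{\inv}(Q;\Sigma)$ is bounded by the $\ep$-relaxed invariance entropy of $K$, for which the known linear-systems result \cite[Thm.~3.1]{Kaw} gives $\sum_{\lambda}\max\{0,n_{\lambda}\log|\lambda|\}$. It is precisely this relaxed version that disposes of the $|\lambda|=1$ directions you flagged; without routing the argument through it (or supplying an equivalent rigorous covering estimate), your upper bound remains unproved.
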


\begin{proof}
By Proposition \ref{prop_elprops}(c), we have $h_{\inv}^{(i)}(Q) \geq h_{\inv}(\pi_i(Q);\Sigma_i)$. Note that $\pi_i(Q)$ has nonempty interior and hence positive Lebesgue measure. Then it follows from a volume growth argument that%
\begin{equation*}
  h_{\inv}(\pi_i(Q);\Sigma_i) \geq \sum_{\lambda\in\sigma(A_i)}\max\left\{0,n_{\lambda}\log|\lambda|\right\},%
\end{equation*}
implying the lower estimate in \eqref{eq_linearform}. The idea of the argument is as follows. The projection of $\Sigma_i$ to the unstable subspace of $A_i$ is a linear system $\Sigma_i^u$ whose trajectories are the projections of those of $\Sigma_i$. If $\pi_i(Q)^u$ is the projection of $\pi_i(Q)$, the invariance entropy of $\pi_i(Q)^u$ is not greater than that of $\pi_i(Q)$. Any $(\tau,\pi_i(Q)^u)$-spanning set naturally is in one-to-one correspondence with a cover of $\pi_i(Q)^u$ whose elements are transformed by the transition map in such a way that their images at time $\tau$ are still contained in $\pi_i(Q)^u$. Then the volume expansion of the transition map of $\Sigma_i^u$ in the $x$-component, which is determined by the unstable determinant of $A_i$, provides a lower bound on the number of elements in this cover, leading to the desired estimate (cf.~\cite[Thm.~5.1]{CKa} or \cite[Thm.~3.1]{Kaw} for more details).%

For the upper estimate, we use the Brunovsky normal form (cf.~\cite[Sec.~5.2]{Son}) for controllable linear systems, together with Proposition \ref{prop_equivalence}. Indeed, we may assume that each of the subsystems $\Sigma_j$, $j\neq i$, is given in Brunovsky normal form and thus has zero eigenvalues. (Here the feedback transformation is linear and has the form $(\gamma(x),\delta(x,u)) = (Tx,Vu-VFx)$ with $T,V$ invertible.) It is easy to see that the strong controlled invariance assumption imposed on $Q$ is preserved by the transformations described in Proposition \ref{prop_equivalence}. Using that $h_{\inv}^{(i)}(Q) \leq h_{\inv}(Q;\Sigma)$ (Proposition \ref{prop_elprops}(c)), it thus suffices to show that%
\begin{equation}\label{eq_toshlin}
  h_{\inv}(Q;\Sigma) \leq \sum_{\lambda \in \sigma(A_1 \oplus A_1 \oplus \cdots \oplus A_n)}\max\left\{0,n_{\lambda}\log|\lambda|\right\}.%
\end{equation}
Using compactness of $Q$ and openness of $\inner K$, one sees that finitely many, say $k$, control values are sufficient to steer from every $x\in Q$ into $\inner K$. Moreover, the set $K$ is controlled invariant and has positive distance $\ep>0$ to the boundary of $Q$. Letting $r_{\inv}(\ep,\tau,K)$ denote the minimal cardinality of a set $\SC\subset\UC$ such that for every $x\in K$ there is $\omega\in\SC$ with $\dist(\varphi(k,x,\omega),K) < \ep$ for $k=1,\ldots,\tau$, we obtain%
\begin{equation*}
  h_{\inv}(Q) \leq \limsup_{\tau\rightarrow\infty}\frac{1}{\tau}\log\left( k \cdot r_{\inv}(\ep,\tau,K) \right).%
\end{equation*}
Obviously, the constant $k$ can be omitted. Therefore, by \cite[Thm.~3.1]{Kaw}, the right-hand side is bounded from above by the right-hand side of \eqref{eq_toshlin}, concluding the proof of \eqref{eq_linearform}. Since in the case $n=1$ the subsystem invariance entropy coincides with the usual invariance entropy, formula \eqref{eq_sumform} immediately follows from \eqref{eq_linearform}.%
\end{proof}

\begin{remark}\
\begin{itemize}
\item The preceding proposition shows that in the given setting the $i$-th subsystem invariance entropy is independent of the specific geometry of the set $Q$ and also of the eigenvalues of the other subsystems $j\neq i$. For nonlinear systems, we expect the situation to be more complicated in general.%
\item Note that the preceding result in the case $n=1$ yields a formula for the invariance entropy of a linear system which in this particular form has not been formulated before. An analogous formula has only been proved for another version of invariance entropy which allows trajectories to leave the set $Q$ and remain in an $\ep$-neighborhood (then the limit for $\ep\searrow0$ is taken).%
\end{itemize}
\end{remark}

\begin{figure}[!h]
\centering
\includegraphics[width=5cm]{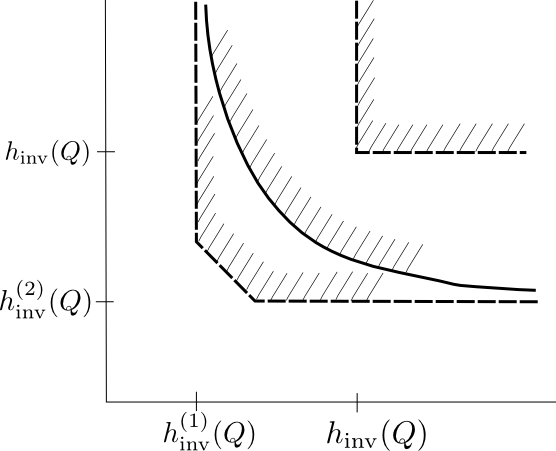}\\
\includegraphics[width=5cm]{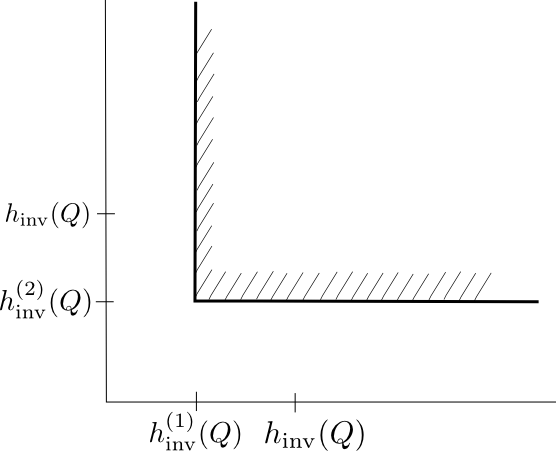}\\
\includegraphics[width=5cm]{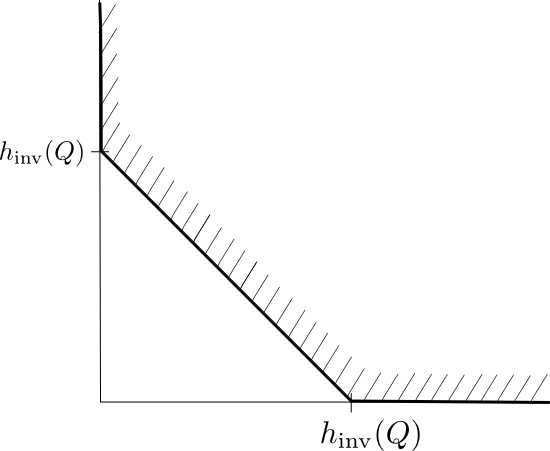}\\
\caption{Top: Lower and upper bounds on network entropy set for two systems. Middle: Network entropy set of a linear system satisfying the conditions of Proposition \ref{prop:HQlin}. Bottom: Network entropy set of synchronization of two chaotic systems.}
\label{fig2}
\end{figure} 

\section{The network entropy set}\label{sec:3}%

In this section, we introduce an object encompassing all possible combinations of data rates for controllers within the given networked system, which allow to make the set $Q$ invariant.%

Consider again the networked system $\Sigma$ of Section \ref{sec:2} with subsystems $\Sigma_i$, $i=1,\ldots,n$. For every time $\tau>0$, define the set%
\begin{eqnarray*}
  H_{\tau}(Q) &:=& \Bigl\{ \frac{1}{\tau}\left(\log\#\SC_1,\ldots,\log\#\SC_n\right): \\
	               && \quad \SC_1\tm\cdots\tm\SC_n \mbox{ finite } (\tau,Q)\mbox{-spanning} \Bigr\},%
\end{eqnarray*}
the elements of which we call \emph{finite-time entropy vectors}.%

\begin{lemma}\label{lem_htau}
The following assertions hold:%
\begin{enumerate}
\item[(a)] $H_{\tau}(Q) \subset H_{s\tau}(Q)$ for all $\tau>0$, $s\in\Z_+$.%
\item[(b)] If $\xi,\eta\in H_{\tau}(Q)$, then $(1/2)(\xi+\eta)\in H_{2\tau}(Q)$.%
\end{enumerate}
\end{lemma}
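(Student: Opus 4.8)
The plan is to prove both parts by the same componentwise concatenation device already used in the proof of Proposition~\ref{prop_elprops}(a), combined with a padding step that pins down the cardinalities exactly. The building block I would record first is this: if $\mathcal{A}_1 \tm \cdots \tm \mathcal{A}_n$ is a finite $(\sigma,Q)$-spanning set and $\mathcal{B}_1 \tm \cdots \tm \mathcal{B}_n$ is a finite $(\rho,Q)$-spanning set, then the componentwise concatenation $\RC_i := \{ \alpha_i \star \beta_i : \alpha_i \in \mathcal{A}_i,\ \beta_i \in \mathcal{B}_i \}$ gives a finite $(\sigma+\rho,Q)$-spanning set $\RC_1 \tm \cdots \tm \RC_n$. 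The spanning property is checked exactly as before: given $x \in Q$, pick $\alpha \in \mathcal{A}_1 \tm \cdots \tm \mathcal{A}_n$ keeping $\varphi(k,x,\alpha) \in \inner Q$ for $k=1,\ldots,\sigma$, set $y := \varphi(\sigma,x,\alpha) \in \inner Q \subset Q$, then pick $\beta \in \mathcal{B}_1 \tm \cdots \tm \mathcal{B}_n$ doing the same for $y$; since $\alpha \star \beta$ agrees with $\alpha$ on the first block and realizes $\beta$ started from $y$ on the second, it keeps $x$ inside $\inner Q$ for $k=1,\ldots,\sigma+\rho$, and its $i$-th component $\alpha_i \star \beta_i$ lies in $\RC_i$. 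Plainly $\#\RC_i \le \#\mathcal{A}_i \cdot \#\mathcal{B}_i$.

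For part (b) I would apply this once with $\sigma=\rho=\tau$, taking $\mathcal{A}=\SC$ realizing $\xi$ and $\mathcal{B}=\mathcal{T}$ realizing $\eta$; the result is a $(2\tau,Q)$-spanning product whose $i$-th factor has cardinality at most $\#\SC_i \cdot \#\mathcal{T}_i$. For part (a) I would iterate the block $s$ times with all factors equal to $\SC$, obtaining an $(s\tau,Q)$-spanning product whose $i$-th factor consists of the $s$-fold concatenations of elements of $\SC_i$ and has cardinality at most $(\#\SC_i)^s$. In both cases the entropy vector of the constructed product equals the desired target, namely $\frac{1}{2}(\xi+\eta)$ respectively $\xi$, \emph{provided} the cardinalities attain these upper bounds.

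The main obstacle is precisely this attainment: the map $(\alpha_i,\beta_i)\mapsto \alpha_i \star \beta_i$ need not be injective, because two sequences of $\SC_i$ may agree on the relevant block $\{0,\ldots,\tau-1\}$ (which alone governs spanning) while differing afterwards, so the naive count can fall short of the product. I would remove this gap by a padding argument. For each index $i$, if the target cardinality equals $1$ (i.e. all factors are singletons), the concatenation already attains it; otherwise some factor has at least two distinct elements of $\UC_i = U_i^{\Z_+}$, which forces $\#U_i \ge 2$ and hence makes $\UC_i$ infinite, so I may adjoin arbitrary additional control sequences to the $i$-th factor until its cardinality equals the target $\#\SC_i \cdot \#\mathcal{T}_i$ (resp.\ $(\#\SC_i)^s$). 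Enlarging any factor preserves the spanning property, since every superset of a spanning set is spanning. After this padding the enlarged product is still a finite $(2\tau,Q)$- (resp.\ $(s\tau,Q)$-) spanning set, and its entropy vector equals the target exactly, which yields $\frac{1}{2}(\xi+\eta) \in H_{2\tau}(Q)$ and $\xi \in H_{s\tau}(Q)$, establishing (b) and (a).
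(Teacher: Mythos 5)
Your proof is correct and follows essentially the same componentwise-concatenation construction as the paper's proof of Lemma~\ref{lem_htau}; the only difference is your padding step, which justifies the exact cardinality count that the paper simply asserts as $\#\widetilde{\SC}_i = (\#\SC_i)^s$ and $\#\SC_i = \#\SC_i^{(1)}\cdot\#\SC_i^{(2)}$ --- a count that could indeed fail if two sequences in a factor agree on the block $\{0,\ldots,\tau-1\}$, so your repair (equivalently, one could choose the arbitrary tail extensions of the concatenations to be pairwise distinct, which likewise needs $\#U_i\geq 2$ unless the factor is a singleton) is a legitimate tightening of the same argument rather than a different route. In short: same approach, with a minor counting subtlety in the paper's version correctly identified and fixed.
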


\begin{proof}
To show (a), let $\xi\in H_{\tau}(Q)$ with corresponding $(\tau,Q)$-spanning set $\SC_1\tm\cdots\tm\SC_n$. For every $i\in\{1,\ldots,n\}$, we consider all possible concatenations of $s$ elements of $\SC_i$, and we denote the set of these control sequences by $\widetilde{\SC}_i$. Then $\#\widetilde{\SC}_i = (\#\SC_i)^s$ and $\widetilde{\SC}_1\tm\cdots\tm\widetilde{\SC}_n$ is an $(s\tau,Q)$-spanning set. This implies%
\begin{equation*}
  \frac{1}{s\tau}\left(\log\#\widetilde{\SC}_1,\ldots,\log\#\widetilde{\SC}_n\right) = \xi \in H_{s\tau}(Q).%
\end{equation*}
To show (b), consider $(\tau,Q)$-spanning sets $\SC_1^{(1)}\tm\cdots\tm\SC_n^{(1)}$ and $\SC_1^{(2)}\tm\cdots\tm\SC_n^{(2)}$ whose associated finite-time entropy vectors are $\xi$ and $\eta$. Let $\SC_i$ be the set of all concatenations of elements of $\SC_i^{(1)}$ and $\SC_i^{(2)}$. Then $\SC_1\tm\cdots\tm\SC_n$ is $(2\tau,Q)$-spanning and $\#\SC_i = \#\SC_i^{(1)}\cdot\#\SC_i^{(2)}$. This implies%
\begin{eqnarray*}
  &&\frac{1}{2\tau}\left(\log\#\SC_1,\ldots,\log\#\SC_n\right)\\
	&&\quad = \frac{1}{2}\Bigl[\frac{1}{\tau}\left(\log\#\SC_1^{(1)},\ldots,\log\#\SC_n^{(1)}\right)\\
     &&\qquad + \frac{1}{\tau}\left(\log\#\SC_1^{(2)},\ldots,\log\#\SC_n^{(2)}\right)\Bigr] = \frac{1}{2}(\xi + \eta),%
\end{eqnarray*}
concluding the proof.%
\end{proof}

We further introduce the set of all limit points of sequences $\xi_k \in H_{\tau_k}(Q)$, where $\tau_k\rightarrow\infty$.%

\begin{definition}
The {\bf network entropy set} of $\Sigma$ is defined as%
\begin{equation*}
  H(Q) := \bigcap_{\tau>0}\cl\bigcup_{t\geq\tau}H_t(Q).%
\end{equation*}
\end{definition}

Obviously, $H(Q)$ is contained in the closed positive orthant of $\R^n$.%

\begin{proposition}\label{prop_entsetprops}
The following assertions hold:%
\begin{enumerate}
\item[(a)] The network entropy set satisfies%
\begin{equation*}
  H(Q) = \cl\bigcup_{\tau>0}H_{\tau}(Q).%
\end{equation*}
In particular, $H(Q)$ is nonempty and closed.%
\item[(b)] Assume that each of the control value sets $U_i$ contains at least two elements. If $\xi\in H(Q)$ and $\eta\geq\xi$ componentwise, then $\eta\in H(Q)$. In particular, $H(Q)$ is unbounded.%
\item[(c)] The set $H(Q)$ is convex.%
\item[(d)] For any $(h_1,\ldots,h_n) \in H(Q)$, it holds that%
\begin{equation*}
   h_{\inv}(Q) \leq \sum_{i=1}^n h_i.%
\end{equation*}
\item[(e)] The set $H(Q)$ contains $(h_{\inv}(Q),\ldots,h_{\inv}(Q))$, provided that $\# U_i \geq 2$ for all $i$.%
\end{enumerate}
\end{proposition}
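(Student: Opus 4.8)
The plan is to take the five parts in order, since (b)--(e) all draw on the two identities established in (a) together with the concatenation and enlargement estimates of Lemma~\ref{lem_htau}. For (a), the engine is the monotonicity $H_\tau(Q)\subset H_{s\tau}(Q)$ of Lemma~\ref{lem_htau}(a): every finite-time entropy vector reappears at arbitrarily large horizons. Concretely, if $\xi\in H_{\tau_0}(Q)$ then $\xi\in H_{s\tau_0}(Q)$ for all $s$, so for any $\tau$ one picks $s$ with $s\tau_0\geq\tau$ to get $\xi\in\bigcup_{t\geq\tau}H_t(Q)$. Hence $\bigcup_{\tau>0}H_\tau(Q)$ lies inside every $\cl\bigcup_{t\geq\tau}H_t(Q)$, thus inside their intersection $H(Q)$; taking closures, together with the trivial reverse inclusion coming from $\bigcup_{t\geq\tau}H_t(Q)\subset\bigcup_{\tau>0}H_\tau(Q)$, yields $H(Q)=\cl\bigcup_{\tau>0}H_\tau(Q)$. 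Closedness is then immediate, and nonemptiness follows because controlled invariance of $Q$ furnishes a finite $(\tau,Q)$-spanning set whose coordinate projections form a finite product spanning set.

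Parts (b) and (e) both rest on the observation that adjoining control sequences to a factor $\SC_i$ preserves the $(\tau,Q)$-spanning property while raising $\#\SC_i$, and that $\#U_i\geq2$ makes $\UC_i$ infinite, so such padding is always possible. For (b), given $\xi\in H(Q)$ approximated by $\xi_k\in H_{\tau_k}(Q)$ realized by $\SC_1^k\tm\cdots\tm\SC_n^k$ and a target $\eta\geq\xi$, I enlarge the $i$-th factor to cardinality $\max\{\#\SC_i^k,\lfloor 2^{\tau_k\eta_i}\rfloor\}$; the new entropy vectors lie in $H_{\tau_k}(Q)$ and converge to $\eta$, whence $\eta\in H(Q)$ by (a), and unboundedness follows by moving any point of $H(Q)$ up the diagonal. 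For (e), starting from a minimal spanning set $\SC$ of cardinality $r_{\inv}(\tau,Q)$, the product $\pi_{\UC_1}(\SC)\tm\cdots\tm\pi_{\UC_n}(\SC)$ contains $\SC$ and has each factor of size at most $r_{\inv}(\tau,Q)$; padding each factor up to exactly $r_{\inv}(\tau,Q)$ produces a product spanning set with all coordinates equal to $\tfrac1\tau\log r_{\inv}(\tau,Q)$, i.e.\ a diagonal point of $H_\tau(Q)$, which tends to $(h_{\inv}(Q),\ldots,h_{\inv}(Q))$ as $\tau\to\infty$.

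For (d), a product spanning set is in particular a $(\tau,Q)$-spanning subset of $\UC$ of cardinality $\prod_i\#\SC_i$, so $r_{\inv}(\tau,Q)\leq\prod_i\#\SC_i$ and hence $\tfrac1\tau\log r_{\inv}(\tau,Q)\leq\sum_i\xi_i$ for every $\xi\in H_\tau(Q)$; since $h_{\inv}(Q)=\inf_\tau\tfrac1\tau\log r_{\inv}(\tau,Q)$, the bound $h_{\inv}(Q)\leq\sum_i\xi_i$ holds on $\bigcup_\tau H_\tau(Q)$ and survives the limit defining $H(Q)$. For (c), closedness from (a) reduces convexity to midpoint convexity, and the one real step is to synchronize horizons: approximating $\xi,\eta\in H(Q)$ by $\xi_k\in H_{\sigma_k}(Q)$ and $\eta_k\in H_{\rho_k}(Q)$, Lemma~\ref{lem_htau}(a) places both in $H_{\sigma_k\rho_k}(Q)$, whereupon Lemma~\ref{lem_htau}(b) gives $\tfrac12(\xi_k+\eta_k)\in H_{2\sigma_k\rho_k}(Q)$, converging to $\tfrac12(\xi+\eta)\in H(Q)$.

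I expect the only genuinely delicate point to be this synchronization in (c): Lemma~\ref{lem_htau}(b) only averages vectors realized at a common horizon, so convexity hinges on first exploiting the multiplicativity of Lemma~\ref{lem_htau}(a) to bring $\xi_k$ and $\eta_k$ to the same time. Everything else reduces to bookkeeping with the padding construction and the elementary inequality $r_{\inv}(\tau,Q)\leq\prod_i\#\SC_i$.
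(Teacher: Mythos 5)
Your proof is correct and follows essentially the same route as the paper's: the monotonicity and concatenation properties of Lemma~\ref{lem_htau} give (a) and the horizon-synchronization needed for midpoint convexity in (c), the padding construction (possible since $\#U_i\geq 2$ makes $\UC_i$ infinite) handles (b) and (e), and the inequality $r_{\inv}(\tau,Q)\leq\prod_i\#\SC_i$ gives (d). The only cosmetic deviation is in (e), where you pad every factor up to cardinality exactly $r_{\inv}(\tau,Q)$ to land directly on a diagonal point of $H_\tau(Q)$, while the paper instead invokes the monotonicity of part (b) to move the projected entropy vector up to the diagonal; both are valid and rest on the same hypothesis.
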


\begin{proof}
To show (a), note that Lemma \ref{lem_htau}(a) implies $H_{\tau}(Q)\subset H(Q)$ for all $\tau>0$ and hence $\cl\bigcup_{\tau>0}H_{\tau}(Q)\subset H(Q)$, since $H(Q)$ is closed as the intersection of closed sets. On the other hand, by the definition of $H(Q)$ it clearly holds that $H(Q)\subset\cl\bigcup_{\tau>0}H_{\tau}(Q)$.%

To show (b), let $\xi_k\rightarrow\xi$, where $\xi_k\in H_{\tau_k}(Q)$, $\tau_k\rightarrow\infty$. Let $\SC_1^{(k)}\tm\cdots\tm\SC_n^{(k)}$ be a $(\tau_k,Q)$-spanning set with corresponding finite-time entropy vector $\xi_k$. By adding additional control sequences from $\UC_i$ to $\SC_i^{(k)}$ (which is possible by our assumption that $\#U_i\geq2$ and hence $\#\UC_i=\infty$), we can construct $(\tau_k,Q)$-spanning sets $\widetilde{\SC}_1^{(k)}\tm\cdots\tm\widetilde{\SC}_n^{(k)}$ with $(1/\tau_k)(\log\#\widetilde{\SC}_1^{(k)},\ldots,\log\#\widetilde{\SC}_n^{(k)}) \rightarrow \eta$. For instance, this holds if $\#\widetilde{\SC}_i^{(k)} = \max\{\#\SC_i^{(k)},\lfloor 2^{\tau_k\eta_i} \rfloor\}$.%

Now let us show (c). Take $\xi,\eta\in H(Q)$ and let $\xi_k\rightarrow\xi$, $\eta_k\rightarrow\eta$ with $\xi_k\in H_{\tau_k}(Q)$, $\eta_k\in H_{\rho_k}(Q)$, where $\tau_k,\rho_k\rightarrow\infty$. From Lemma \ref{lem_htau}(a) it follows that $\xi_k,\eta_k \in H_{\rho_k\tau_k}(Q)$ for all $k\geq1$. Lemma \ref{lem_htau}(b) implies $(1/2)(\xi_k + \eta_k) \in H_{2\rho_k\tau_k}(Q)$ and thus%
\begin{equation*}
  \frac{1}{2}(\xi + \eta) = \lim_{k\rightarrow\infty}\frac{1}{2}(\xi_k + \eta_k) \in H(Q).%
\end{equation*}
By an iterative argument and closedness of $Q$, it follows that the whole line segment $[\xi,\eta]$ is in $H(Q)$, showing convexity of $H(Q)$.%

To show (d), take $h \in H_{\tau}(Q)$ for some $\tau>0$. Then there exists a finite $(\tau,Q)$-spanning set of the form $\SC_1 \tm \cdots \tm \SC_n$ such that $h_i = (1/\tau)\log\#\SC_i$. It follows that%
\begin{eqnarray*}
   h_{\inv}(Q) &=& \inf_{t>0}\frac{1}{t}\log r_{\inv}(t,Q) \leq \frac{1}{\tau}\log\#(\SC_1\tm\cdots\tm\SC_n)\\
	             &=& \frac{1}{\tau}\log\prod_{i=1}^n\#\SC_i = \sum_{i=1}^n h_i.%
\end{eqnarray*}
Since $H(Q) = \cl\bigcup_{\tau\in\N}H_{\tau}(Q)$, the assertion follows.%

Finally, to show (e), consider a finite $(\tau,Q)$-spanning set $\SC \subset \UC_1\tm\cdots\tm\UC_n$. Since $\SC \subset \SC_1 \tm \cdots \tm \SC_n$ with $\SC_i = \pi_{\UC_i}\SC$, the set $\SC_1 \tm \cdots \tm \SC_n$ is also $(\tau,Q)$-spanning and%
\begin{eqnarray*}
   H(Q) &\ni& \frac{1}{\tau}(\log\#\SC_1,\ldots,\log\#\SC_n)\\
	      &\leq& \frac{1}{\tau}(\log\#\SC,\ldots,\log\#\SC),%
\end{eqnarray*}
which by (b) implies $(1/\tau)(\log\#\SC,\ldots,\log\#\SC) \in H(Q)$ and consequently $(h_{\inv}(Q),\ldots,h_{\inv}(Q))\in H(Q)$ (provided that $\# U_i \geq 2$).%
\end{proof}

The interpretation of the network entropy set is to be found in a data-rate theorem similar to Theorem \ref{thm:datarate1}.%

\begin{theorem}\label{thm:datarate2}
Let $Q$ be a controlled invariant set of $\Sigma$ and fix $i\in\{1,\ldots,n\}$. Then a point $(h_1,\ldots,h_n)$ is in the interior of the network entropy set, $\inner H(Q)$, if and only if there are a control strategy $\Gamma,\Delta$ and channels with zero-error capacities $(h_1,\ldots,h_n)$ that make $Q$ invariant.%  
\end{theorem}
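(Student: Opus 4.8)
The plan is to mirror the proof of Theorem~\ref{thm:datarate1}, but now to track all $n$ channels at once and read off the full entropy vector rather than a single coordinate. The one structural fact I would isolate first is that the zero-error capacity $h_i=\lim_\tau(1/\tau)\log b^{(i)}_\tau$ of the $i$-th channel is actually the supremum $\sup_\tau(1/\tau)\log b^{(i)}_\tau$: concatenating pairwise distinguishable signals yields pairwise distinguishable signals, so $\tau\mapsto\log b^{(i)}_\tau$ is superadditive. In particular $(1/\tau)\log b^{(i)}_\tau\le h_i$ for \emph{every} $\tau$, a one-sided bound I will use repeatedly.

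For the implication $(h_1,\ldots,h_n)\in\inner H(Q)\Rightarrow$ achievability, I would first combine the interior hypothesis with Proposition~\ref{prop_entsetprops}(a), i.e.\ $H(Q)=\cl\bigcup_{\tau}H_\tau(Q)$, to produce a single finite horizon $\tau_0$ and a finite $(\tau_0,Q)$-spanning product set $\SC_1\tm\cdots\tm\SC_n$ whose rates are strictly below the target: $(1/\tau_0)\log\#\SC_i\le h_i-\ep$ for some $\ep>0$ and all $i$. Next I would fix, for each $i$, a channel of zero-error capacity exactly $h_i$ (for instance a noiseless channel designed so that $b^{(i)}_\tau\sim 2^{\tau h_i}$). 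Since $h_i>h_i-\ep$, one can choose $s$ large enough that $b^{(i)}_{s\tau_0}\ge 2^{s\tau_0(h_i-\ep)}\ge(\#\SC_i)^s$ simultaneously for all $i$. By Lemma~\ref{lem_htau}(a) the $s$-fold concatenation of $\SC_1\tm\cdots\tm\SC_n$ is $(s\tau_0,Q)$-spanning with $i$-th factor of size $(\#\SC_i)^s\le b^{(i)}_{s\tau_0}$, hence reliably indexable through channel $i$. The block-coding controller of Theorem~\ref{thm:datarate1} — measure the state, transmit the index of a suitable spanning sequence, actuate, and repeat every $s\tau_0$ steps — then renders $Q$ invariant.

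For the converse I would run the extraction argument of Theorem~\ref{thm:datarate1} on every coordinate at once. Given a successful strategy through channels of capacities $(h_1,\ldots,h_n)$, passing to a deterministic version of each channel with minimal image size $b^{(i)}_\tau$ and pruning each $\Gamma^{(i)}_\tau$ shows that over $(0,\tau]$ the set $\SC\subset\UC$ of producible input tuples is $(\tau,Q)$-spanning with $\#\pi_{\UC_i}(\SC)\le b^{(i)}_\tau$. As in Proposition~\ref{prop_entsetprops}(e), the product $\SC_1\tm\cdots\tm\SC_n$ with $\SC_i:=\pi_{\UC_i}(\SC)$ is again $(\tau,Q)$-spanning, so
\[
  p_\tau:=\tfrac1\tau(\log\#\SC_1,\ldots,\log\#\SC_n)\in H_\tau(Q),\qquad p_{\tau,i}\le\tfrac1\tau\log b^{(i)}_\tau\le h_i .
\]
Thus $p_\tau\in H(Q)$ lies below $(h_1,\ldots,h_n)$ in every coordinate, and upward closedness of $H(Q)$ (Proposition~\ref{prop_entsetprops}(b)) gives $(h_1,\ldots,h_n)\in H(Q)$; this already yields the contrapositive for points outside $H(Q)$. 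To upgrade membership to interiority I would again use that the capacity is a supremum: whenever each $(1/\tau)\log b^{(i)}_\tau$ is \emph{strictly} below $h_i$, the vector $p_\tau$ lies strictly below $(h_1,\ldots,h_n)$ in all coordinates at once, placing $(h_1,\ldots,h_n)$ in the interior of the upward cone over $p_\tau\in H(Q)$, hence in $\inner H(Q)$.

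The step I expect to be the real obstacle is exactly this last upgrade from $H(Q)$ to $\inner H(Q)$ in the converse. The bound $(1/\tau)\log b^{(i)}_\tau\le h_i$ is strict for every $\tau$ only when the zero-error capacity is not attained at any finite block length; for a capacity-attaining channel (e.g.\ a noiseless channel, where $(1/\tau)\log b^{(i)}_\tau=h_i$ for all $\tau$) the extracted $p_\tau$ may sit on $\partial H(Q)$, so a genuinely successful strategy can correspond to a boundary point. A careful proof must therefore either restrict to channels whose capacity is a strict supremum or absorb this boundary phenomenon into the ``interior'' formulation; reconciling this — rather than the routine block-coding and extraction — is where the argument demands the most care.
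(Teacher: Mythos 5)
Your proposal takes exactly the route the paper intends: the paper gives no separate argument, stating only that the proof is ``entirely similar to Theorem~\ref{thm:datarate1} (repeating the arguments to all channels simultaneously)'', and your two halves are precisely that argument written out --- block coding on a finite-horizon product spanning set extracted from $\inner H(Q)$ via Proposition~\ref{prop_entsetprops}(a) for sufficiency, and the determinization/pruning extraction of Theorem~\ref{thm:datarate1} run on each coordinate, followed by projecting to a product spanning set as in Proposition~\ref{prop_entsetprops}(e) and invoking upward closedness, for necessity. Three minor technical remarks. First, superadditivity of $\tau\mapsto\log b^{(i)}_\tau$ is not automatic for the paper's abstract set-valued channels $\kappa_\tau$ (it requires memorylessness or some consistency across block lengths, which the paper never imposes); but you do not actually need it: in the forward direction the limit $(1/\tau)\log b^{(i)}_\tau\to h_i$ already gives $b^{(i)}_{s\tau_0}\geq(\#\SC_i)^s$ for large $s$, and in the converse you can replace the pointwise bound $(1/\tau)\log b^{(i)}_\tau\leq h_i$ by extracting a convergent subsequence of the bounded vectors $p_\tau$ and noting that any limit point has coordinates $\leq h_i$ and lies in $H(Q)$ by the very definition of $H(Q)$ as a set of limit points. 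Second, upward closedness requires $\#U_i\geq2$ (Proposition~\ref{prop_entsetprops}(b)), an hypothesis the theorem omits. Third, the clause ``fix $i\in\{1,\ldots,n\}$'' in the statement is vestigial, copied from Theorem~\ref{thm:datarate1}.

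The obstacle you flag at the end is genuine, and you are right that it cannot be closed: it is a defect of the statement, not of your argument. Already for $n=1$, if the spanning numbers grow exactly at rate $h$, say $r_{\inv}(\tau,Q)=2^{h\tau}$ for all $\tau$, then a noiseless channel with $b_\tau=2^{h\tau}$ has zero-error capacity exactly $h$ and the block-coding strategy succeeds, yet $h\notin\inner H(Q)=(h,\infty)$; so ``achievable $\Rightarrow$ interior'' fails as literally stated. The paper's omitted proof inherits the same limitation, since Theorem~\ref{thm:datarate1} is phrased as an infimum and is silent on attainment. What is provable --- and what your proposal in fact proves --- is the sandwich $\inner H(Q)\subset A\subset H(Q)$, where $A$ denotes the set of capacity vectors for which some strategy and channels make $Q$ invariant; your write-up should be read as a correct proof of that corrected statement, with the boundary of $H(Q)$ left undecided, exactly as the infimum formulation of Theorem~\ref{thm:datarate1} leaves it undecided for a single channel.
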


The proof, being entirely similar to Theorem \ref{thm:datarate1} (repeating the arguments to all channels simultaneously), is omitted.%

The next proposition relates the network entropy set to the subsystem entropies $h_{\inv}^{(i)}(Q)$.%

\begin{proposition}\label{prop_projinf}
The following statements hold:%
\begin{enumerate}
\item[(a)] For every $i\in\{1,\ldots,n\}$,%
\begin{equation*}
  h_{\inv}^{(i)}(Q) = \inf P_i(H(Q)),%
\end{equation*}
where $P_i:\R^n\rightarrow\R$ is the projection to the $i$-th component.%
\item[(b)] If $Q = Q_1 \tm \cdots \tm Q_n$, then%
\begin{equation*}
   H(Q) = \prod_{i=1}^n [h_{\inv}(Q_i),\infty).%
\end{equation*}
\end{enumerate}
\end{proposition}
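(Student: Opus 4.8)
The plan is to prove the two parts separately, with part (b) being essentially a corollary of part (a) combined with Proposition \ref{prop_elprops}(b). For part (a), I would establish the equality $h_{\inv}^{(i)}(Q) = \inf P_i(H(Q))$ by showing both inequalities, exploiting the fact that the finite-time sets $H_\tau(Q)$ are built from the same $(\tau,Q)$-spanning product sets $\SC_1 \tm \cdots \tm \SC_n$ that define $r_{\inv}^{(i)}(\tau,Q)$, just recorded coordinatewise.

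First I would show $\inf P_i(H(Q)) \leq h_{\inv}^{(i)}(Q)$. For each $\tau$, choose a $(\tau,Q)^{(i)}$-spanning set $\SC_i$ of minimal cardinality $r_{\inv}^{(i)}(\tau,Q)$; by definition the product $\UC_1 \tm \cdots \tm \SC_i \tm \cdots \tm \UC_n$ is $(\tau,Q)$-spanning, but to land in $H_\tau(Q)$ I need the other factors to be finite. Here I would invoke finiteness from Proposition \ref{prop_elprops}(a): one can pick finite sets $\SC_j \subset \UC_j$ ($j \neq i$) making $\SC_1 \tm \cdots \tm \SC_n$ a finite $(\tau,Q)$-spanning set whose $i$-th factor is still $\SC_i$. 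The resulting finite-time entropy vector has $i$-th coordinate $(1/\tau)\log r_{\inv}^{(i)}(\tau,Q)$, so $\inf P_i(H_\tau(Q)) \leq (1/\tau)\log r_{\inv}^{(i)}(\tau,Q)$; taking $\tau \to \infty$ (using \eqref{eq_infeq}) and noting $H_\tau(Q) \subset H(Q)$ gives the bound. For the reverse inequality $h_{\inv}^{(i)}(Q) \leq \inf P_i(H(Q))$, take any point $h \in H(Q)$; approximate it by vectors $\xi_k \in H_{\tau_k}(Q)$ with $\tau_k \to \infty$, arising from finite $(\tau_k,Q)$-spanning products whose $i$-th factor $\SC_i^{(k)}$ is in particular $(\tau_k,Q)^{(i)}$-spanning, so $r_{\inv}^{(i)}(\tau_k,Q) \leq \#\SC_i^{(k)} = 2^{\tau_k (\xi_k)_i}$. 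Dividing by $\tau_k$, taking logs, and letting $k \to \infty$ yields $h_{\inv}^{(i)}(Q) \leq (\xi_k)_i \to P_i(h)$; since $h$ was arbitrary, $h_{\inv}^{(i)}(Q) \leq \inf P_i(H(Q))$.

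For part (b), the product structure $Q = Q_1 \tm \cdots \tm Q_n$ lets me decouple the systems entirely. The inclusion $\prod_i [h_{\inv}(Q_i),\infty) \subset H(Q)$ follows because a product of $(\tau,Q_i)$-spanning sets $\SC_i$ is $(\tau,Q)$-spanning (each factor acts on its own subsystem independently, as in the proof of Proposition \ref{prop_elprops}(b)), so any coordinatewise combination of near-optimal individual spanning sets produces a finite-time entropy vector converging to a point with $i$-th coordinate arbitrarily close to $h_{\inv}(Q_i)$; then monotonicity Proposition \ref{prop_entsetprops}(b) fills out the upper orthant. Conversely, for $h \in H(Q)$ I use part (a) together with \eqref{eq_cpif} of Proposition \ref{prop_elprops}(b), which gives $h_{\inv}^{(i)}(Q) = h_{\inv}(Q_i)$; since $P_i(h) \geq \inf P_i(H(Q)) = h_{\inv}^{(i)}(Q) = h_{\inv}(Q_i)$ for every $i$, we get $h \in \prod_i [h_{\inv}(Q_i),\infty)$.

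The main obstacle I anticipate is the bookkeeping in part (a) around finiteness: the definition of $r_{\inv}^{(i)}$ permits the non-$i$ factors to be the full infinite sets $\UC_j$, whereas membership in $H_\tau(Q)$ demands all factors be finite with well-defined log-cardinalities. Reconciling these requires carefully extracting a finite $(\tau,Q)$-spanning subproduct whose $i$-th factor is exactly a minimal $(\tau,Q)^{(i)}$-spanning set, which is precisely the content hidden in the finiteness argument of Proposition \ref{prop_elprops}(a); making this selection explicit is the delicate point. A secondary subtlety is that $\inf P_i(H(Q))$ is an infimum over the closed set $H(Q)$ rather than over the $H_\tau(Q)$ directly, so I must be slightly careful that passing to limits does not lose or create the infimum, which the closedness in Proposition \ref{prop_entsetprops}(a) handles.
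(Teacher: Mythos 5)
Your proposal is correct and takes essentially the same route as the paper's own proof: in (a) you establish both inequalities exactly as the paper does (projecting finite spanning products $\SC_1\tm\cdots\tm\SC_n$ to bound $h_{\inv}^{(i)}(Q)$ from above by $P_i$ of finite-time entropy vectors, and completing a minimal $(\tau,Q)^{(i)}$-spanning set to a finite spanning product for the reverse bound), and in (b) you combine product spanning sets with part (a) and \eqref{eq_cpif}. The \emph{delicate point} you flag is precisely the step the paper makes explicit via continuity of $\varphi$ in $x$, the neighborhoods $U_x$, and a finite subcover of the compact set $Q$, followed by projecting the resulting finite spanning set to each coordinate.
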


\begin{proof}
For the proof of (a), fix $i$ and let $s := \inf P_i(H(Q))$. Then there exists a sequence $\xi_k\in H(Q)$ with $P_i(\xi_k) \rightarrow s$. We can approximate the vectors $\xi_k$ by elements of $\bigcup_{\tau>0}H_{\tau}(Q)$. Hence, we find sequences $\tau_k\rightarrow\infty$ and $\eta_k\in H_{\tau_k}(Q)$ with $P_i(\eta_k) \rightarrow s$. For each $\eta_k$ we have a corresponding $(\tau_k,Q)$-spanning set $\SC_1^{(k)}\tm\cdots\tm\SC_n^{(k)}$. Then $\SC_i^{(k)}$ is $(\tau,Q)^{(i)}$-spanning and $(1/\tau_k)\log\#\SC_i^{(k)}\rightarrow s$, implying%
\begin{eqnarray*}
  h_{\inv}^{(i)}(Q) &=& \lim_{\tau\rightarrow\infty}\frac{1}{\tau}\log r_{\inv}^{(i)}(\tau,Q)\\
	&\leq& \lim_{k\rightarrow\infty}\frac{1}{\tau_k}\log \#\SC_i^{(k)} = \inf P_i(H(Q)).%
\end{eqnarray*}
To show the other inequality, choose for given $\ep>0$ a $\tau>0$ with $(1/\tau)\log r_{\inv}^{(i)}(\tau,Q)-h_{\inv}^{(i)}(Q) \leq \ep$. Let $\SC_i\subset\UC_i$ be a $(\tau,Q)^{(i)}$-spanning set of minimal cardinality $r_{\inv}^{(i)}(\tau,Q)$. We claim that there exist finite sets $\SC_j\subset\UC_j$ ($j\neq i$) such that $\SC_1\tm \cdots \tm \SC_n$ is $(\tau,Q)$-spanning. Indeed, for every $x\in Q$ there exists $\omega = \omega_x \in \UC_1 \tm \cdots \tm \UC_{i-1} \tm \SC_i \tm \UC_{i+1} \tm \cdots \tm \UC_n$ with $\varphi(k,x,\omega_x) \in \inner Q$ for $k=1,\ldots,\tau$. By continuity of $\varphi$ with respect to $x$, there exists an open neighborhood $U_x \subset X$ of $x$ with $\varphi(k,y,\omega_x)\in\inner Q$ for $k=1,\ldots,\tau$ and all $y\in U_x$. By compactness, $Q$ can be covered by finitely many of such neighborhoods, say $U_{x_1},\ldots,U_{x_m}$. The corresponding control sequences $\omega_{x_1},\ldots,\omega_{x_m} \in \UC_1\tm\cdots\tm\UC_{i-1}\tm\SC_i\tm\UC_{i+1}\tm\cdots\tm\UC_n$ form a finite $(\tau,Q)$-spanning set $\SC$, implying the claim (let $\SC_j := \pi_{\UC_j}(\SC)$). Because $H_{\tau}(Q)\subset H(Q)$, this implies%
\begin{eqnarray*}
  h_{\inv}^{(i)}(Q) + \ep &\geq& \frac{1}{\tau}\log r_{\inv}^{(i)}(\tau,Q)\\ &=& P_i\left(\frac{1}{\tau}\left(\log\#\SC_1,\ldots,\log\#\SC_n\right)\right)\\
   &\geq& \inf P_i(H(Q)).%
\end{eqnarray*}
Since this holds for every $\ep$, the proof is complete.%

To prove (b), note that a product set $\SC_1 \tm \cdots \tm \SC_n \subset \UC$ is a finite $(\tau,Q)$-spanning set if and only if each $\SC_i$ is a finite $(\tau,Q_i)$-spanning set. Hence, there exists an element of $H_{\tau}(Q)$ that is minimal componentwise, implying that $H_{\tau}(Q)$ and thus $H(Q)$ is a Cartesian product. Together with statement (a) the assertion follows.%
\end{proof}

Connecting these results, we obtain the following estimate:%
\begin{eqnarray*}
 && \prod_{i=1}^n [h_{\inv}(Q),\infty) \subset H(Q) \\
   && \subset \prod_{i=1}^n [h^{(i)}_{\inv}(Q),\infty) \cap \left\{(h_1, \ldots, h_n): \sum_{i=1}^n  h_i \geq h_{\inv}(Q)\right\}.%
\end{eqnarray*}
See Fig.~\ref{fig2} for a graphical representation.%

\section{The network entropy set for linear systems}\label{sec:4}%

For linear systems, the network entropy set is easy to characterize, under some reasonable assumptions.%

\begin{proposition}\label{prop:HQlin}
For a network of controllable linear systems satisfying the strong invariance condition of Theorem \ref{thm_lin}, and a compact set $Q$ of nonempty interior, the network entropy set is%
\begin{equation*}
  H(Q) = \prod_{i=1}^n \left[\sum_{\lambda\in\sigma(A_i)}\max\left\{0,n_{\lambda}\log|\lambda|\right\},\infty\right).%
\end{equation*}
\end{proposition}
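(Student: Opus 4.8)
Write $a_i := \sum_{\lambda\in\sigma(A_i)}\max\{0,n_\lambda\log|\lambda|\}$, so that by Theorem \ref{thm_lin} we have $h_{\inv}^{(i)}(Q) = a_i$ for every $i$. The plan is to prove the two inclusions separately. The inclusion $H(Q)\subset\prod_{i=1}^n[a_i,\infty)$ is immediate: Proposition \ref{prop_projinf}(a) gives $\inf P_i(H(Q)) = h_{\inv}^{(i)}(Q) = a_i$, so every $\xi\in H(Q)$ satisfies $P_i(\xi)\geq a_i$ for each $i$. Since the subsystems are linear with $\#U_i\geq2$, the set $H(Q)$ is closed under componentwise increase by Proposition \ref{prop_entsetprops}(b). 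Hence the reverse inclusion reduces to the single statement that the corner $(a_1,\ldots,a_n)$ lies in $H(Q)$, i.e.\ that for every $\ep>0$ there is an arbitrarily large $\tau$ and a product $(\tau,Q)$-spanning set $\SC_1\tm\cdots\tm\SC_n$ with $\frac1\tau\log\#\SC_i\le a_i+\ep$ holding simultaneously for all $i$.

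It is worth stressing why this last point does not follow from the general machinery alone: the constraints collected in Propositions \ref{prop_entsetprops} and \ref{prop_projinf} (convexity, upward-closedness, $\inf P_i(H(Q))=a_i$, and $\sum_i h_i\ge h_{\inv}(Q)=\sum_i a_i$) are also satisfied by proper closed convex upper sets strictly smaller than $\prod_i[a_i,\infty)$, so the corner must genuinely be exhibited by a construction. The construction exploits the block-diagonal structure $A=A_1\oplus\cdots\oplus A_n$, $B=B_1\oplus\cdots\oplus B_n$: because subsystem $i$'s motion and the control $u_i$ entering it are decoupled from the others, a spanning set may be sought in product form, and the count in the $i$-th factor is then governed only by the unstable volume expansion of $A_i$, namely $2^{a_i\tau}$.

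Concretely, I would first fix an interior point $p=(p_1,\ldots,p_n)\in\inner Q$ together with a small closed product box $W=W_1\tm\cdots\tm W_n\subset\inner Q$ with $p\in\inner W$, chosen so small that its $\ep_0$-neighborhood still lies in $\inner Q$. Next comes a \emph{steering phase}: using the strong invariance hypothesis (every $x\in Q$ can be driven into $\inner K$ in one step) followed by controllability of each pair $(A_i,B_i)$, every $x\in Q$ can be brought into $W$ in a fixed number $T$ of steps; by compactness of $Q$ and continuity of the transition map finitely many joint control values suffice, and projecting these to each coordinate yields finite sets, contributing $0$ to the asymptotic rate. Then comes a \emph{maintenance phase} over the remaining horizon: since $W$ is a product and the dynamics decouple, keeping subsystem $i$ within $\ep_0$ of $W_i$ (equivalently, practically stabilizing $\Sigma_i$ near $p_i$) can be achieved by the single-system linear construction for controllable systems, producing finite sets $\mathcal{T}_i\subset\UC_i$ with $\frac1\tau\log\#\mathcal{T}_i\to a_i$ (cf.\ \cite[Thm.~3.1]{Kaw} and \cite{NEM}). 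The product $\prod_i\mathcal{T}_i$ then keeps the joint state inside the $\ep_0$-neighborhood of $W$, hence inside $\inner Q$. Concatenating the steering and maintenance controls coordinatewise gives a product $(\tau,Q)$-spanning set whose $i$-th factor has rate tending to $a_i$, proving $(a_1,\ldots,a_n)\in H(Q)$.

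The main obstacle is precisely the coupling introduced by the set $Q$, which in general is not a Cartesian product. If one tried to maintain each subsystem near the (non-product) target set $K$ used in Theorem \ref{thm_lin}, per-subsystem confinement would only trap the joint state in the product hull $\prod_i N_{\ep}(\pi_i(K))$, which can protrude out of $Q$. Collapsing first, via the steering phase, onto a small product box around a \emph{single} interior point $p$ is what reconciles the decoupled, product-form control sets with the non-product constraint $\inner Q$, since a sufficiently small product neighborhood of an interior point is automatically contained in $\inner Q$. A secondary technical point is to guarantee that the small box $W_i$ can indeed be maintained with rate exactly $a_i$; this is handled by the $\ep$-relaxed (practical-stabilization) form of the linear upper bound already invoked in the proof of Theorem \ref{thm_lin}, where the contribution of eigenvalues with $|\lambda|\le1$ is subexponential and hence negligible in the rate.
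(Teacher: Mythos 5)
Your proposal is correct and takes essentially the same route as the paper: the paper likewise first steers all of $Q$ in uniformly bounded time (using controllability and the strong invariance condition, at zero asymptotic cost, via a finite product set $\RC_1\tm\cdots\tm\RC_n$ of steering controls) into a small product set $C=C_1\tm\cdots\tm C_n$ around an interior point, then maintains each factor separately at rate $a_i$, obtaining $\prod_{i=1}^n[h_{\inv}(C_i),\infty)=H(C)\subset H(Q)\subset\prod_{i=1}^n[h_{\inv}^{(i)}(Q),\infty)$ with both endpoints equal to $a_i$ by Theorem \ref{thm_lin} and Proposition \ref{prop_projinf}. The only cosmetic difference is that the paper takes the $C_i$ controlled invariant and applies Proposition \ref{prop_projinf}(b) to the product set, whereas you reduce to the corner point via upward-closedness and maintain a small box through the $\ep$-relaxed bound of \cite[Thm.~3.1]{Kaw} --- a device the paper itself already uses inside the proof of Theorem \ref{thm_lin}.
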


\begin{proof}
We may assume that $0 \in \inner Q$. Let $C = C_1 \tm \cdots \tm C_n$ be a Cartesian product of compact and controlled invariant sets $C_i \subset \R^{d_i}$ with nonempty interiors, such that $0 \in \inner C \subset C \subset \inner Q$. By controllability, every state $x\in Q$ can be controlled to the origin in a finite time $\tau_x$ using a control sequence $u_k(x)$. By compactness of $Q$, we may assume that $\tau_x \leq \tau^*$ for all $x\in Q$ and a constant $\tau^*$. Then we may assume $\tau_x = \tau^*$ for all $x$, because the control sequences $u_k(x)$ can be extended by zeros. By continuity of the transition map in the state variable, a whole neighborhood $N_x$ of $x$ can be steered into $\inner C$ with the same control sequence $u_k(x)$ without leaving $Q$. Hence, there exists a finite set of the form $\RC = \RC_1 \tm \cdots \tm \RC_n \subset \UC$ such that for every $x\in Q$ there is $(v_1,\ldots,v_n) \in \RC$ with $\varphi(k,x,v) \in Q$ for $k=0,1,\ldots,\tau^*$ and $\varphi(\tau^*,x,v) \in C$. Take an element $\xi\in H(C)$. Then there exists a sequence $\xi_k = (1/\tau_k)(\log\#\SC_1^k,\ldots,\log\#\SC_n^k)$, where $\tau_k\rightarrow\infty$ and $\SC_1^k \tm \cdots \tm \SC_n^k$ is $(\tau_k,C)$-spanning such that $\xi_k \rightarrow \xi$. The set $(\SC_1^k \star \RC_1) \tm \cdots \tm (\SC_n^k \star \RC_n)$ is $(\tau_k+\tau^*,Q)$-spanning and%
\begin{equation*}
  \frac{1}{\tau_k + \tau^*}\left(\log\#\SC_1^k + \log\#\RC_1,\ldots,\log\#\SC_n^k + \log\#\RC_n\right)%
\end{equation*}
converges to $\xi$, implying $\xi \in H(Q)$. Hence, $H(C) \subset H(Q)$, and therefore Proposition \ref{prop_projinf}(b) yields%
\begin{equation*}
  \prod_{i=1}^n [h_{\inv}(C_i),\infty) \subset H(Q) \subset \prod_{i=1}^n [h_{\inv}^{(i)}(Q),\infty).%
\end{equation*}
Now $h_{\inv}(C_i) = h_{\inv}^{(i)}(Q) = \sum_{\lambda\in\sigma(A_i)}\max\{0,n_{\lambda}\log|\lambda|\}$ by Theorem \ref{thm_lin}, concluding the proof.%
\end{proof}

\section{The network entropy set for synchronization of chaos}\label{sec:5}%

We now present an example of a control problem where the network entropy set is not rectangular, i.e., a Cartesian product of intervals, but exhibits a trade-off between the data rates required by both subsystems.%

Consider the angle-multiplying system%
\begin{equation*}
  \Sigma:\quad x_{k+1} = (\alpha x_k + u_k) \mbox{ mod } 1%
\end{equation*}
on the unit circle $\rmS^1 = \R/\Z$ with an integer $|\alpha| \geq 2$ and $u_k \in U := [-1,1]$. The natural dynamics of this system, i.e., when $u_k\equiv 0$, is a well-known example of a chaotic system.%

We consider two copies of $\Sigma$ with states $x^{(1)}$ and $x^{(2)}$, which we seek to interconnect in order to reach `practical synchronization', i.e., we want to make the set%
\begin{equation*}
  Q := \left\{ (x^{(1)},x^{(2)}) \in \rmS^1 \tm \rmS^1\ :\  d(x^{(1)},x^{(2)}) \leq \delta \right\}%
\end{equation*}
invariant for a small $\delta>0$, where $d(\cdot,\cdot)$ is the canonical distance on $\R/\Z$, given by $d(x+\Z,y+\Z) = \min_{j\in\Z}|x - y + j|$.%

\begin{theorem}
The entropy set for practical synchronization is given by%
\begin{equation*}
  H(Q) = \left\{ (h_1,h_2) \in \R^2\ :\ h_1,h_2\geq 0,\ h_1 + h_2 \geq \log|\alpha| \right\}.%
\end{equation*}
\end{theorem}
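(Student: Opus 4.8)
The plan is to establish the two inclusions separately, leaning on the convexity and monotonicity of $H(Q)$ from Proposition \ref{prop_entsetprops}. For the inclusion $H(Q) \subseteq \{h_1,h_2\ge0,\ h_1+h_2\ge\log|\alpha|\}$, I would first compute $h_{\inv}(Q) = \log|\alpha|$ and then quote Proposition \ref{prop_entsetprops}(d), which forces $h_1+h_2 \ge h_{\inv}(Q)$ for every $(h_1,h_2)\in H(Q)$; combined with the fact that $H(Q)$ sits in the positive orthant this gives the desired half-plane. To get $h_{\inv}(Q)$ I would apply the state transformation $(x^{(1)},x^{(2)}) \mapsto (x^{(1)}, x^{(2)}-x^{(1)} \bmod 1)$, together with the control change $(u^{(1)},u^{(2)})\mapsto(u^{(1)},u^{(2)}-u^{(1)})$. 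Under it the dynamics decouples into the unconstrained angle-multiplying system in the first coordinate and the error system $e_{k+1}=\alpha e_k + w_k \bmod 1$, and $Q$ becomes $\rmS^1\tm[-\delta,\delta]$. Since invariance entropy is preserved by state transformations and the first coordinate carries no constraint, $h_{\inv}(Q)$ equals the invariance entropy of the one-dimensional error system on $[-\delta,\delta]$.

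The technical heart is to show that this one-dimensional entropy is exactly $\log|\alpha|$. For the lower bound I would reuse the volume-growth argument from the proof of Theorem \ref{thm_lin}: each control sequence in a $(\tau,[-\delta,\delta])$-spanning set keeps invariant a subset of $[-\delta,\delta]$ on which the time-$\tau$ map is affine with slope $\alpha^\tau$, so that subset has length at most $2\delta/|\alpha|^\tau$, and covering the whole interval requires at least $|\alpha|^\tau$ sequences. For the matching upper bound I would build a spanning set of size $O(|\alpha|^\tau)$ by steering the error into one of $|\alpha|$ subintervals of $[-\delta,\delta]$ at each step; this reset is always feasible because the control range $[-1,1]$ has length $\ge1$ and hence covers the whole circle after the $\alpha$-expansion.

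For the reverse inclusion it suffices to show that the two extreme points $(0,\log|\alpha|)$ and $(\log|\alpha|,0)$ lie in $H(Q)$. Indeed, Proposition \ref{prop_entsetprops}(c) then puts the segment joining them --- precisely the part of the line $h_1+h_2=\log|\alpha|$ in the positive orthant --- in $H(Q)$, and Proposition \ref{prop_entsetprops}(b), applicable since each $\#U_i=\infty$, adds every point dominating a point of that segment, which is exactly the target half-plane. By the symmetry of the two copies it is enough to treat $(0,\log|\alpha|)$. For this I would freeze the first subsystem with $u^{(1)}\equiv0$, so $\SC_1=\{0\}$ and the first coordinate of the entropy vector is $0$; the error then satisfies $e_{k+1}=\alpha e_k + u_k^{(2)} \bmod 1$, and choosing $\SC_2$ to be a minimal $(\tau,[-\delta,\delta])$-spanning set for this error system makes $\{0\}\tm\SC_2$ a $(\tau,Q)$-spanning set whose entropy vector is $(0,\tfrac1\tau\log r_{\inv}(\tau,[-\delta,\delta]))$. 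Letting $\tau\to\infty$ and using $H(Q)=\cl\bigcup_{\tau>0}H_\tau(Q)$ from Proposition \ref{prop_entsetprops}(a) places $(0,\log|\alpha|)$ in $H(Q)$.

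I expect the main obstacle to be the exact one-dimensional value $\log|\alpha|$, specifically handling the circle topology: the time-$\tau$ map is affine only on each branch that remains inside $[-\delta,\delta]$, so the volume bound has to be argued branch-by-branch, and the upper-bound construction has to confirm that the reset into $|\alpha|$ subintervals is realizable with the available control at every step. Once this value is in hand, the remaining steps are a routine assembly of the properties of $H(Q)$ recorded in Proposition \ref{prop_entsetprops}.
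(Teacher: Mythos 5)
Your proposal is correct and follows essentially the same route as the paper: both reduce to the scalar error dynamics $e_{k+1} = \alpha e_k + w_k$ on a small arc to obtain $h_{\inv}(Q) = \log|\alpha|$, produce the extreme points $(\log|\alpha|,0)$ and $(0,\log|\alpha|)$ by freezing one subsystem's input and letting the other correct the error, and then assemble $H(Q)$ from Proposition \ref{prop_entsetprops}(b),(c),(d). The only cosmetic differences are that the paper lifts the error to the linear system on $\R$ and cites Theorem \ref{thm_lin} for the one-dimensional value, whereas you decouple by an explicit change of coordinates and prove the value directly --- just note that your transformed control set $\{(u_1,\,u_2-u_1): u_1,u_2\in[-1,1]\}$ is a parallelogram rather than a Cartesian product, which is harmless here because the first coordinate needs no control and the projection of this set onto the error input covers the circle.
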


\begin{proof}
For clarity, in the following we write $\bar{x}$ for elements of $\rmS^1$ and $x$ for their representatives in $\R$, i.e., $\bar{x} = x + \Z$. Choosing $\delta$ small enough, we find that the interval $I := [-\delta,\delta] \subset \R$ is controlled invariant for the linear system%
\begin{equation*}
  \Sigma^l:\quad x_{k+1} = \alpha x_k + u_k,\quad u_k \in U,\ x_k \in \R.%
\end{equation*}
Indeed, this holds for every $\delta \leq 1/(2|1-\alpha|)$, because for a given $x \in [-\delta,\delta]$ and small $\ep>0$ the control input $u_x := (1 - \alpha)x \pm \ep$ satisfies $|u_x| < 1/2 + \ep \in U$ and $\alpha x + u_x = x \pm \ep$. Moreover, if $\varphi^l(k,x,\omega)$ and $\varphi(k,\bar{x},\omega)$ denote the transition maps of $\Sigma^l$ and $\Sigma$, resp., then%
\begin{equation*}
  \varphi(k,\bar{x},\omega) \equiv \varphi(k,x,\omega) + \Z.%
\end{equation*}
We claim that every $(\tau,I)$-spanning set $\SC$ for $\Sigma^l$ yields a $(\tau,Q)$-spanning set of the same cardinality for the product system on $\rmS^1 \tm \rmS^1$, given by%
\begin{equation*}
  \SC' := \left\{ (\omega,0) \ :\ \omega \in \SC \right\}.%
\end{equation*}
Indeed, if $(\bar{x}^{(1)},\bar{x}^{(2)}) \in Q$, we may assume that the representatives $x^{(1)},x^{(2)}\in\R$ are chosen such that $x^{(1)} - x^{(2)} \in [-\delta,\delta]$. Then there exists $\omega \in \SC$ such that $\varphi^l(k,x^{(1)} - x^{(2)},\omega)\in (-\delta,\delta)$ for $k=1,\ldots,\tau$. Hence,%
\begin{eqnarray*}
  \varphi\left(k,\bar{x}^{(1)},\omega\right) \!\!\!&-&\!\!\! \varphi\left(k,\bar{x}^{(2)},0\right) \mbox{ mod } 1\\
	&=& \varphi^l\left(k,x^{(1)},\omega\right) - \varphi^l\left(k,x^{(2)},0\right) \mbox{ mod } 1\\
	&=& \varphi^l\left(k,x^{(1)}-x^{(2)},\omega\right) \mbox{ mod } 1,%
\end{eqnarray*}
implying, for $k=1,\ldots,\tau$,%
\begin{eqnarray*}
  && d\left(\varphi\left(k,\bar{x}^{(1)},\omega\right),\varphi\left(k,\bar{x}^{(2)},0\right)\right)\\
	&&  = \min_{j\in\Z}\left|\varphi^l\left(k,x^{(1)}-x^{(2)},\omega\right) + j\right| < \delta,%
\end{eqnarray*}
which proves the claim. Consequently, $h_{\inv}(Q) = \log|\alpha|$ (using Theorem \ref{thm_lin} with $n=1$). By what we have just shown, $(\tau,Q)$-spanning sets of the form $\SC \tm \{0\}$ exist, which immediately implies $h_{\inv}^{(2)}(Q) = 0$. Using that $\SC$ grows asymptotically like $2^{h_{\inv}(I;\Sigma^l)} = 2^{\log |\alpha|}$, we obtain $(\log|\alpha|,0) \in H(Q)$. By symmetry, $h^{(1)}_{\inv}(Q) = 0$ and $(0,\log|\alpha|)\in H(Q)$. Together with Proposition \ref{prop_entsetprops}(b,c,d), this proves the theorem.%
\end{proof}

The theorem is illustrated on bottom of Fig.~\ref{fig2}. Therefore, we observe a trade-off in the data rates required by each of the two subsystems: one may receive less, or even no information from the state of the overall system, provided that the other subsystem receives more.%

\section{Perspectives}\label{sec:6}%

We see the current framework as both interesting in itself, at the theoretical and applicative level, and a stepping stone to a more ambitious framework including prescribed rates between any pair of systems, both in a deterministic and in a stochastic framework. Such extensions are by no means trivial, as for instance a general stochastic version for the invariance entropy (or any similarly general tool) for even a single system is not known to this date. A better understanding of the (necessarily nonlinear) situations where a trade-off between the different data rates is allowed to the different subsystems, as we showed for synchronization of chaos, is also desirable.%

\bibliography{subsent_bib}

\end{document}